\title{Noncommutative resolution, F-blowups and $D$-modules}
\author[Y. Toda]{Yukinobu Toda}
\address{Institute for the Physics and Mathematics of the Universe (IPMU), University of Tokyo, 
Kashiwano-ha 5-1-5, Kashiwa City, Chiba 277-8582, Japan}
\email{toda@ms.u-tokyo.ac.jp, toda-914@pj9.so-net.ne.jp}
\author[T. Yasuda]{Takehiko Yasuda}
\address{Department of Mathematics and Computer Science, 
Faculty of Science, Kagoshima University, 1-21-35 Korimoto, Kagoshima 890-0065, Japan}
\email{yasuda@sci.kagoshima-u.ac.jp}
\theoremstyle{plain}
\newtheorem{thm}{Theorem}[section]
\newtheorem{prop}[thm]{Proposition}
\newtheorem{cor}[thm]{Corollary}
\newtheorem{lem}[thm]{Lemma}
 \theoremstyle{definition}
\newtheorem{defn}[thm]{Definition}
\newtheorem{prob}[thm]{Problem}
\theoremstyle{remark}
\newtheorem{rem}[thm]{Remark}
\def\AA{\mathbb A}
\def\ZZ{\mathbb Z}
\def\Zp{\mathbb{Z}_{>0}}
\def\Znn{\mathbb{Z}_{\ge 0}}
\def\bD{\mathbf{D}}
\def\fm{\mathfrak m}
\def\11{\mathbf{1}}
\def\lmodules{\text{-}\mathbf{mod}}
\DeclareMathOperator{\Spec}{Spec}
\DeclareMathOperator{\Hom}{Hom}
\DeclareMathOperator{\FB}{FB}
\DeclareMathOperator{\End}{End}
\DeclareMathOperator{\dimbf}{\mathbf{dim}}
\DeclareMathOperator{\gldim}{gldim}
\def\GHilb{\mathrm{Hilb}^{G}}
\def\GCoh{\mathbf{Coh}^{G}}
\def\Coh{\mathbf{Coh}}
\begin{document}

\maketitle

\begin{abstract}
We explain the isomorphism between the $G$-Hilbert scheme and the F-blowup
from the noncommutative viewpoint after Van den Bergh.
In doing this, we immediately and naturally arrive at the notion of $D$-modules. 
We also find, as a byproduct, a canonical way to construct a
noncommutative resolution at least for a few classes of singularities 
in positive characteristic.
\end{abstract}

\section{Introduction}

The starting point of this work is the isomorphism between the $G$-Hilbert
scheme and the F-blowup found in \cite{Yasuda:math0706.2700}.
The $G$-Hilbert scheme, introduced by Ito and Nakamura \cite{MR1420598},
is associated to a smooth  $G$-variety $M$ with $G$ a finite group,
while the $e$-th F-blowup introduced by the second author \cite{Yasuda:math0706.2700} is associated to 
 the $e$ times iteration of the  Frobenius morphism $F: X \to X$
 of a singular variety in positive characteristic. 
 Both are the moduli spaces of certain $0$-dimensional subschemes.
 
 From now on, we work over  an algebraically closed field $k$ of
 characteristic $p >0$. 
Then under some condition,
 the mentioned isomorphism connects the $G$-Hilbert scheme, $\GHilb(M)$, 
of a $G$-variety $M$ and 
the $e$-th F-blowup, $\FB_{e}(X)$, of the quotient variety $X=M/G$.
\[
\GHilb(M) \cong \FB_{e}(X)
\]
Our motivation is the following:

\begin{prob}\label{prob-1}
Understand a mechanism behind this phenomenon!
\end{prob}

The $G$-Hilbert scheme and the F-blowup 
fit into similar diagrams;
\begin{align*}
\xymatrix{
\text{Univ.\ fam.}\ar[d]\ar[r] &  M \ar[d] \\
 \GHilb(M) \ar[r] & M/G
}   \\
\intertext{and}
\xymatrix{
\text{Univ.\ fam.}\ar[d]\ar[r] &  X \ar[d] \\
 \FB_{e}(X) \ar[r] & X
} 
\end{align*}
In each diagram, the right vertical arrow is a finite and dominant morphism,
the left is finite and flat, the horizontal  ones are  projective and birational.
Moreover in each diagram,  the bottom one is the universal birational flattening of the right. 
A difference between the two diagrams is that 
the vertical arrows in the first are $G$-covers, in particular, separable,
while the ones in the second are purely inseparable. 
Bridgeland, King and Reid \cite{MR1824990} proved that
in the first diagram, under some condition, the Fourier-Mukai transform associated to the universal
family gives the equivalence 
\begin{equation}\label{isom-BKR}
\bD (\Coh(\GHilb(M))) \cong \bD(\GCoh(M)) .
\end{equation}
Here $\Coh (-)$ ($\GCoh(-)$) denotes the category of 
coherent ($G$-)sheaves
and $\bD(-)$ denotes the bounded derived category.
It is natural to ask:

\begin{prob}\label{prob-2}
Does a similar result hold for the second diagram?
\end{prob}

We will address Problems \ref{prob-1} and \ref{prob-2}
 in terms of the noncommutative resolution due to
Van den Bergh \cite{MR2077594,MR2057015}.
Now let us recall his observation.
For simplicity, suppose $M=\AA^{d}_{k}$ and $G \subset SL_{d}(k)$.
Let $S$ and $R$ be the coordinate rings of $M$ and $X=M/G$.
Then the endomorphism ring $A:=\End_{R}(S)$ is a noncommutative crepant resolution.
Namely $A$ is regular in the sense that it has finite global dimension
and satisfies the condition corresponding to the crepancy.
Now the $G$-Hilbert scheme is identified with some moduli
space $W$ of $A$-modules and a coherent $G$-sheaf on $M$
is identified with an $A$-module. 
Thus \eqref{isom-BKR} translates into 
\begin{equation}\label{translated-equivalence}
\bD (\Coh(W)) \cong \bD(A \lmodules) 
\end{equation}
and the Galois group $G$ disappears from view.
The equivalence in this form can fit to the situation of F-blowup.

Consider an affine scheme $X=\Spec R$ over $k$.
For $q =p^{e}$, the $e$-th Frobenius morphism of $X$
is identified with the morphism $X \to X_{e}:= \Spec R^{q}$
defined by the inclusion map $R^{q} \hookrightarrow R$.
We suppose that it is a finite morphism.
The relevant noncommutative ring is 
\[
D_{R,e}:=\End_{R^{q}}(R).
\]
Here particularly interesting is that $D_{R,e}$
is a ring of differential operators on $R$ and $\bigcup _{e} D_{R,e}$
is the ring of all differential operators on $R$.
The following is our answer to Problems \ref{prob-1} and \ref{prob-2}:

\begin{thm}
Let $M=\AA_{k}^{d}=\Spec S$, $G \subset GL _{d}(k)$ a small finite subgroup
and $X:= M/G = \Spec R$. Then for sufficiently large $e$, we have an equivalence of 
abelian categories 
\[
\End_{R}(S)\lmodules \cong D_{R,e} \lmodules.
\]
Hence $D_{R,e}$ has global dimension $d$.
Moreover $\GHilb(M)$ and $\FB_{e} (X)$ are the moduli spaces corresponds 
to each other via this equivalence, hence clearly isomorphic to each other.
\end{thm}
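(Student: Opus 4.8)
The plan is to produce an explicit Morita-type equivalence between $\End_R(S)$ and $D_{R,e}$ by exhibiting both as endomorphism rings of a common generator. First I would recall Van den Bergh's identification: $S$ is a reflexive (indeed maximal Cohen--Macaulay, since $R$ is a polynomial invariant ring and $G$ is small) $R$-module, $\End_R(S)$ has finite global dimension equal to $d$, and $\mathrm{GHilb}(M)$ is identified with the moduli space of certain $\End_R(S)$-modules (the ones corresponding to the universal family). Dually, by the theory of F-blowups, $R$ is a finite $R^q$-module for $e\gg 0$ (here one uses F-finiteness, which holds since $k$ is perfect and $R$ is finitely generated over $k$), and $D_{R,e}=\End_{R^q}(R)$ with $\mathrm{FB}_e(X)$ the moduli of the analogous $D_{R,e}$-modules. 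So the theorem reduces to a single ring-isomorphism-up-to-Morita statement, after which the module-category equivalence, the global dimension assertion, and the identification of moduli spaces all follow formally.

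The key step is to show that, for $e$ sufficiently large, $R$ as an $R^q$-module contains every indecomposable reflexive (equivalently, MCM) $R$-module as a direct summand — in other words, that $R$ decomposes over $R^q$ into a direct sum in which each indecomposable MCM $R$-module appears. Concretely: pulling back via $X\to X/G$, write $S$ as an $S^q$-module; since $S=k[x_1,\dots,x_d]$, the decomposition of $S$ over $S^q$ is the classical one by monomials of degree $<q$ in each variable, so every projective $S^q$-module summand type appears once $q>1$. Now take $G$-invariants: the $R^q$-module $R=S^G$ acquires summands indexed by the isotypic components, and by McKay/Auslander these isotypic components are exactly the indecomposable reflexive $R$-modules $(\text{-})$ — i.e. $R\cong\bigoplus_\rho (S\otimes\rho)^G$ as $R$-modules, and for $e$ large enough each appears in the $R^q$-decomposition of $R$ with positive multiplicity. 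This is where the ``sufficiently large $e$'' hypothesis is genuinely used, and it is the technical heart: one must control the monomials-mod-$q$ count against the finitely many $G$-representations and verify each isotypic piece survives. Granting this, $R$ and $S=\bigoplus_\rho(S\otimes\rho)^G$ are additive generators of the same additive subcategory $\mathrm{add}$ of $R$-mod, hence $\End_{R^q}(R)$ and $\End_R(S)$ are Morita equivalent; and since both $R^q\hookrightarrow R$ and $R\hookrightarrow S$ realize the same reflexive-hull picture, the Morita bimodule is in fact the identity bimodule making the equivalence one of abelian categories sending $M$ to $M$, not just a derived or Morita equivalence with a twist.

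From the equivalence $\End_R(S)\lmodules\cong D_{R,e}\lmodules$ the global dimension statement is immediate: $\End_R(S)$ has global dimension $d$ by Van den Bergh, and global dimension is a Morita invariant. For the moduli spaces, I would check that the equivalence matches the two universal families: under $\End_R(S)\lmodules\cong\GCoh(M)$ (Van den Bergh's dictionary) the family cut out by $\mathrm{GHilb}(M)$ corresponds to a fixed $\End_R(S)$-module stability/dimension datum, and under the F-blowup side the family cut out by $\mathrm{FB}_e(X)$ corresponds to the matching $D_{R,e}$-module datum; since the equivalence is the identity on underlying $R$-modules it carries one datum to the other, so the fine moduli spaces representing the two functors are canonically isomorphic.

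The main obstacle I anticipate is precisely the summand-exhaustion claim: showing that for $e\gg0$ the $R^q$-module $R$ contains all indecomposable reflexive $R$-modules as summands. One expects to prove it by the chain $S\mid S$ over $S^q$ $\Rightarrow$ each $\rho$-isotypic $(S\otimes\rho)^G\mid R$ over $R^q$, using that $S$ is free over $S^q$ with the regular representation appearing (so all $\rho$ appear with multiplicity $\ge 1$) once $q\ge|G|$ — but one must be careful that $G\subset GL_d(k)$ is only assumed small, not in $SL_d$, and that $p$ may divide $|G|$, in which case ``reflexive $=$ $\rho$-isotypic'' and the splitting of $S$ over $S^G$ need the F-finiteness machinery of the second author rather than naive representation theory; handling the modular case cleanly is the delicate point.
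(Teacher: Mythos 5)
Your overall route is the same as the paper's: decompose $S$ over $S^{q}$ by monomials of exponents $<q$ (equivalently $S\cong S/\fm^{[q]}\otimes_{k}S^{q}$), take $G$-invariants to see that $R$ is a direct sum of modules of covariants over $R^{q}$ in which every $R^{q}(U_{i})$ occurs, conclude that $R$ and $S^{q}$ generate the same additive category of $R^{q}$-modules, hence that $D_{R,e}=\End_{R^{q}}(R)$ is Morita equivalent to $\End_{R^{q}}(S^{q})\cong\End_{R}(S)\cong S^{q}[G]$ ($G$ small and tame), which gives $\gldim D_{R,e}=d$, and then transport stability data to match the moduli spaces. (Your parenthetical ``the isotypic components are exactly the indecomposable reflexive $R$-modules'' is only true in low dimension, but it is also not needed: fullness of both $R$ and $S^{q}$ as sums of covariants is all the Morita argument uses.) However, two steps are genuine gaps. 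First, the summand-exhaustion claim is exactly the technical heart and you leave it granted, with a justification that does not work in characteristic $p$: it is not known in general that all irreducibles occur among the monomials of degree $<q$ in each variable once $q\ge\sharp G$; the paper instead invokes Bryant's theorem (every irreducible occurs in $S^{\le l}V$ for some $l$, hence in $S/\fm^{[q]}$ for $q\gg l$) combined with Smith--Van den Bergh's $R\cong(S/\fm^{[q]}\otimes_{k}S^{q})^{G}$, and obtains the effective bound $q\ge\sharp G$ only for abelian $G$ (the bound holds in general only in characteristic zero). Relatedly, your closing worry about $p\mid\sharp G$ is moot: the theorem is proved under the tameness hypothesis (the relevant section is on tame quotient singularities), and your isotypic-decomposition argument, Auslander's isomorphism $\End_{R}(S)\cong S[G]$, and the $\GHilb\cong\FB$ comparison all require it.

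Second, the assertion that the Morita bimodule is ``the identity bimodule'' and that the equivalence ``is the identity on underlying $R$-modules'' is false, and it matters for the moduli statement. The equivalence is $\Hom_{R^{q}}(S^{q},R)\otimes_{\End_{R^{q}}(S^{q})}-$, and on underlying $R$-modules it sends $M$ to $(S\otimes_{S^{q}}M)^{G}$ (the paper computes this by factoring the functor through $\End_{R^{q}}(S)\lmodules$). The identification of moduli spaces is therefore not purely formal: one must check that dimension vectors and (semi)stability correspond under the equivalence (via compatibility of the functors $\Hom_{R^{q}}(-,R^{q}(U_{i}))\otimes-$ with it), so that the two fine moduli spaces of stable modules are canonically isomorphic, and then identify the component dominating $X_{e}$ of the $D_{R,e}$-side moduli space with $\FB_{e}(X)$. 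The F-blowup is not a priori a moduli space of $D_{R,e}$-modules for a given stability, so saying ``the equivalence carries one datum to the other'' begs the question precisely at this point; the paper closes it by using the explicit formula $\Phi(M)\cong(S\otimes_{S^{q}}M)^{G}$ to see that the induced isomorphism on the dominating component coincides with the previously constructed isomorphism $\GHilb(M)\cong\FB_{e}(X)$.
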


The point is that $R$ as well as $S$ and $S^q$ consists of 
the so-called modules of covariants as an $R^q$-module.
This was shown by Smith and Van den Bergh 
\cite{MR1444312} except for the fact that every module of covariants appears in $R$ (if $q$ is sufficiently large).
Then the last fact follows from Bryant's theorem \cite{MR1206130} in the representation theory 
(for details, see \S \ref{subsec-covariant}).

Now the equivalence \eqref{translated-equivalence} is directly translated 
to the F-blowup situation.
In view of these results, we may say that $D_{R,e}$
is a noncommutative counterpart of F-blowup.

\begin{rem}
It is natural that differential operators appear. 
For, in the Galois theory for purely inseparable extensions,
derivations play a role of automorphisms in the Galois theory of normal
extensions (see \cite{MR0392906}).
\end{rem}

We will also see that the F-blowup of an F-pure variety
can be expressed as the moduli space of $D_{R,e}$-modules
explicitly.

Since $D_{R,e}$ is defined for arbitrary $k$-algebra $R$,
we can ask:

\begin{prob}
When is $D_{R,e}$ a noncommutative (crepant) resolution?
\end{prob}

We see that for at least three different classes of singularities,
the answer is affirmative:

\begin{thm}
Suppose that $R$ is a complete local ring and has
one of the following singularity type:
\begin{enumerate}
\item 1-dimensional analytically irreducible singularity
\item tame quotient singularity
\item simple singularity of type $A_{1}$ (odd characteristic) 
\end{enumerate}
Then for sufficiently large $e$,  $D_{R,e}$ is a noncommutative resolution,
that is, of finite global dimension. (However it is not crepant in general. See Section 6.)
\end{thm}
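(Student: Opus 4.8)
The plan is to reduce each of the three cases to a statement about the $R$-module $F^{e}_{*}R$ (the $e$-th Frobenius pushforward). Since $R$ is complete local, it is F-finite and $R^{q}\hookrightarrow R$ is finite, so $D_{R,e}=\End_{R^{q}}(R)$ is a module-finite $R$-algebra, and via the Frobenius isomorphism $R\xrightarrow{\sim}R^{q}$ it is canonically isomorphic to $\End_{R}(F^{e}_{*}R)$. Global dimension of such an endomorphism algebra is governed by how large the module is: the strategy is to show that for $e\gg 0$ the additive closure $\mathrm{add}(F^{e}_{*}R)$ stabilizes, equalling $\mathrm{add}(M_{0})$ for a fixed $R$-module $M_{0}$ that is a representation generator of a suitable finite category $\cC$ of $R$-modules, and then to check that $\End_{R}(M_{0})$ has finite global dimension. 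Since $\End_{R}(F^{e}_{*}R)$ and $\End_{R}(M_{0})$ are Morita equivalent as soon as the additive closures agree, and global dimension is Morita invariant, this proves the theorem. Establishing the stabilization means proving two things: every indecomposable summand of $F^{e}_{*}R$ lies in $\cC$ (for all $e$), and conversely every object of $\cC$ occurs in $F^{e}_{*}R$ once $e$ is large.

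First I would set up this "large $e$" input case by case. For (2), a tame quotient singularity $R=S^{G}$ with $S$ regular and $p\nmid|G|$, the category $\cC$ is that of modules of covariants $(S\otimes_{k}V)^{G}$, $V\in\mathrm{Irr}(G)$; by Smith--Van den Bergh every summand of $F^{e}_{*}R$ is of this form, and by Bryant's theorem (exactly as used for the first theorem of the paper) every module of covariants occurs in $F^{e}_{*}R$ for $e\gg 0$, so $\mathrm{add}(F^{e}_{*}R)=\mathrm{add}(S)$ with $S$ viewed as an $R$-module. For (3), $R$ the $A_{1}$ hypersurface singularity in odd characteristic, $R$ has finite Cohen--Macaulay type and $\cC$ is the finite set of indecomposable maximal Cohen--Macaulay modules; since $R$ is Cohen--Macaulay, $F^{e}_{*}R$ is MCM and its summands automatically lie in $\cC$, and one checks from the explicit description of the MCM modules ($R$ together with the spinor/syzygy module(s)) that all of them appear for $e\gg 0$. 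For (1), $R$ one-dimensional and analytically irreducible, the normalization $\bar R$ is a complete discrete valuation ring finite over $R$, the summands of $F^{e}_{*}R$ are (isomorphic to) fractional ideals sandwiched between $R$ and $\bar R$ of which only finitely many of the relevant type occur, and for $e\gg 0$ all of these appear, including $\bar R$ itself — essentially the statement that the F-blowup of a curve becomes the normalization.

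Next I would verify finite global dimension of $\End_{R}(M_{0})$. In case (2), $M_{0}=S$ as an $R$-module and $\End_{R}(S)=\End_{S^{G}}(S)\cong S\ast G$, the skew group algebra, which has global dimension $d=\gldim S$ because $p\nmid|G|$; this is precisely the mechanism of the paper's first theorem. In case (3), $\End_{R}(M_{0})$ is the Auslander algebra of $R$, and by the Auslander--Leuschke theorem for rings of finite CM type its global dimension is $\max(2,\dim R)<\infty$. In case (1), $\End_{R}(M_{0})$ with $M_{0}$ the direct sum of the relevant fractional ideals is treated directly: one exploits that $M_{0}$ contains both $R$ and the regular ring $\bar R$, so that along the finite birational inclusion $R\hookrightarrow\bar R$ one can resolve projective resolutions of simple $\End_{R}(M_{0})$-modules down to the (trivially hereditary) algebra $\End_{\bar R}(\bar R)=\bar R$; keeping the lattice of these ideals under control is the only subtle point here.

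The main obstacle is the "large $e$" surjectivity: showing that \emph{every} object of $\cC$ genuinely occurs as a summand of $F^{e}_{*}R$, not merely that the summands that occur all lie in a fixed finite list. For (2) this is exactly Bryant's theorem and is available off the shelf; for (1) and (3) the analogous surjectivity must be proved by hand — for (1) via the stabilization of the curve F-blowup to the normalization, for (3) via an explicit Frobenius computation on the quadric cone — and tracking which indecomposables are hit (and with what multiplicities) is the delicate part, together with the one-dimensional bookkeeping in the previous paragraph. By contrast, the identifications $D_{R,e}\cong\End_{R}(F^{e}_{*}R)$ and the Morita reduction are routine, requiring only F-finiteness and finiteness of $R^{q}\hookrightarrow R$, both automatic in the complete local setting.
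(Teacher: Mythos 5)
Your case (2) is exactly the paper's mechanism (Smith--Van den Bergh plus Bryant, Morita equivalence with the skew group algebra $S^{q}[G]$), and your case (3) follows the paper's route in outline: identify $F^{e}_{*}R$ (equivalently $R$ as an $R^{q}$-module) as a representation generator for the finitely many indecomposable MCM modules and invoke Leuschke's theorem. But in (3) the decisive step is precisely the one you defer to ``one checks from the explicit description \dots that all of them appear for $e\gg 0$'': the paper does not do an explicit Frobenius computation on the quadric; it proves the occurrence of the trivial summand by F-purity via Fedder's criterion, the occurrence of a nontrivial summand by Kunz's theorem (non-freeness of $R$ over $R^{q}$), reduces to small $d$ by Kn\"orrer periodicity, and in the odd-dimensional case gets the second nontrivial summand from the coordinate-change symmetry interchanging the two nontrivial MCM modules. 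Without an argument of this kind your (3) is an assertion, not a proof.

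Case (1) is where the proposal genuinely fails. You assume that the indecomposable summands of $F^{e}_{*}R$ are fractional ideals between $R$ and $\bar R$ of which ``only finitely many of the relevant type occur,'' and that all of them occur for $e\gg 0$; neither claim is justified, and the first is not available in this generality: an analytically irreducible curve singularity is not assumed to have finite CM type, and the isomorphism classes of intermediate fractional ideals can form an infinite family, so your $M_{0}$ need not exist and the planned finite category $\cC$ collapses. You also leave the crucial finiteness of $\gldim\End_{R}(M_{0})$ as an admittedly ``subtle'' sketch. The actual situation is simpler and different from what you describe: once $q$ is large enough that $x^{q}\in R$, i.e.\ $S^{q}\subseteq R$ for $S=k[[x]]$ the normalization, $R$ is a finite torsion-free, hence free, module over the DVR $S^{q}$, so $R\cong (S^{q})^{\oplus q}$ as an $R^{q}$-module --- every indecomposable summand is the normalization, not a mixture of intermediate ideals (in particular $R$ itself is not a summand unless $R$ is regular). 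Combined with the elementary lemma $\End_{R^{q}}(S^{q})=\End_{S^{q}}(S^{q})=S^{q}$ (clear denominators: for $s\in S^{q}$ choose $n$ with $x^{n},x^{n}s\in R^{q}$), this gives $D_{R,e}\cong M_{q\times q}(S^{q})$, a matrix algebra over a DVR, hence Morita equivalent to $S^{q}$ and of global dimension $1$, and Cohen--Macaulay as well. Without this (or some substitute) computation, your treatment of case (1) does not go through.
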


The paper is organized as follows.
Sections 2 and 3 are devoted to the case of tame quotient singularities,
which is the core of the paper.  Here we prove the mentioned equivalence
$\End_{R}(S)\lmodules \cong D_{R,e} \lmodules$ and derive
the correspondence of moduli spaces. 
In Section 4, we  express an F-blowup of an F-pure variety
as the moduli space of $D_{R,e}$-modules explicitly. 
In Sections 5 and 6, we treat the 
1-dimensional analytically irreducible singularity
and the simple singularity of type $A_{1}$ respectively.

\subsection*{Acknowledgment}

This work was supported by Grant-in-Aid for Young Scientists (Start-up) 
(20840036, 20503882) and World Premier
International Research Center Initiative
(WPI Initiative), MEXT, Japan. The second author thanks Shoji Yokura 
for his help in reading  a paper written in German.

\subsection*{Convention and notation}

Throughout the paper, we work over an algebraically closed field $k$ of
characteristic $p > 0$. We always denote by $q$ the $e$-th
power of $p$ for some $e \in \Znn$.
For a commutative $k$-algebra $R$, we write $ R ^{q} := \{ f^{q} | f \in R \} \subset R$
and $D_{R,e}:= \End_{R^{q}}(R)$.
For the affine scheme $X=\Spec R$, we write $X_{e} := \Spec R^{q}$.
We will use the symbol, $\gldim$,
to denote the global dimension respectively.
For a ring $A$, we denote by $A\lmodules$ the category of left $A$-modules.

\section{Tame quotient singularities}

\subsection{Functors between module categories}

In this subsection, we collect a few results on
functors between module categories, which will be needed below.
These are probably well-known.

Fix a commutative ring $R$.
In the following, $L,M,N$ denote $R$-modules.
For convenience, we regard $L$ as a $(\End_{R}(L),R)$-bimodule,
and the same for $M,N$.
Then for instance, $\Hom_{R}(L,M)$ is a $(\End_{R}(M),\End_{R}(L))$-bimodule.

\begin{prop}\label{prop-compatible}
Suppose that $L$ is a direct summand of $M^{\oplus r}$ for some  $r \in \Zp$.
Then the natural morphism of $(\End_{R}(N),\End_{R}(L))$-bimodules
\begin{equation}\label{isom-bimodule}
\Hom_{R}(M,N) \otimes_{\End_{R}(M)}\Hom_{R} (L,M) \to \Hom_{R}(L,N) , \ f \otimes g \mapsto g \circ f
\end{equation}
is an isomorphism. Hence the composition of the functors
\[
 \Hom_{R} (L,M) \otimes_{\End_{R}(L)} - : \End_{R}(L) \lmodules \to \End_{R}(M)\lmodules
\]
and
\[
 \Hom_{R} (M,N) \otimes_{\End_{R}(M)} - : \End_{R}(M) \lmodules \to \End_{R}(N)\lmodules
\]
is canonically isomorphic to 
\[
 \Hom_{R} (L,N) \otimes_{\End_{R}(L)} - : \End_{R}(L) \lmodules \to \End_{R}(N)\lmodules.
\]
\end{prop}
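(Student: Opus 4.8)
The plan is to prove the bimodule isomorphism \eqref{isom-bimodule} first, since the statement about composition of functors follows formally from it. Indeed, once we know \eqref{isom-bimodule} is an isomorphism of $(\End_R(N),\End_R(L))$-bimodules, then for any $\End_R(L)$-module $V$ we get natural isomorphisms
\[
\Hom_R(M,N)\otimes_{\End_R(M)}\bigl(\Hom_R(L,M)\otimes_{\End_R(L)}V\bigr)
\cong \bigl(\Hom_R(M,N)\otimes_{\End_R(M)}\Hom_R(L,M)\bigr)\otimes_{\End_R(L)}V
\cong \Hom_R(L,N)\otimes_{\End_R(L)}V,
\]
using associativity of tensor product and then \eqref{isom-bimodule}. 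One should check that this chain of isomorphisms is compatible with the left $\End_R(N)$-module structures and is functorial in $V$, but this is routine bookkeeping given that \eqref{isom-bimodule} respects the bimodule structures.

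So the real content is showing \eqref{isom-bimodule} is an isomorphism. First I would treat the case $L = M$: then the map is $\Hom_R(M,N)\otimes_{\End_R(M)}\End_R(M)\to\Hom_R(M,N)$, which is visibly the canonical isomorphism $P\otimes_A A\cong P$. Next, the case $L = M^{\oplus r}$: here $\Hom_R(L,M) = \Hom_R(M^{\oplus r},M) \cong \Hom_R(M,M)^{\oplus r}$ as a right $\End_R(M)$-module, and $\Hom_R(L,N)\cong\Hom_R(M,N)^{\oplus r}$; one checks directly that the composition map becomes the direct sum of $r$ copies of the identity case, hence an isomorphism. (Concretely, $\End_R(M^{\oplus r})$ is the matrix ring $M_r(\End_R(M))$ and everything is compatible with this description.)

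Finally, the general case where $L$ is a direct summand of $M^{\oplus r}$: write $M^{\oplus r} = L \oplus L'$ and let $e\in\End_R(M^{\oplus r})$ be the idempotent projecting onto $L$. All three terms in \eqref{isom-bimodule} — with $L$ replaced by $M^{\oplus r}$ — are functors of $L$ that carry direct sum decompositions to direct sum decompositions: $\Hom_R(M^{\oplus r},M)$ decomposes as $\Hom_R(L,M)\oplus\Hom_R(L',M)$ compatibly with the right action of $\End_R(M)$, and similarly for the target $\Hom_R(M^{\oplus r},N)$. The map \eqref{isom-bimodule} for $M^{\oplus r}$ is the direct sum of the two maps for $L$ and for $L'$. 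Since we have just shown the map for $M^{\oplus r}$ is an isomorphism, each direct summand map — in particular the one for $L$ — is an isomorphism as well. One then verifies that the decomposition is compatible with the left $\End_R(N)$-action, which is immediate since that action is on the $N$-slot and untouched by the idempotent $e$.

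I do not anticipate a serious obstacle here; the statement is essentially formal. The one point requiring a little care is making sure that when passing to the direct summand $L$, the \emph{left} module structures (over $\End_R(N)$, and over the rings appearing in the functor statement) are genuinely respected by the splitting, so that the isomorphisms are bimodule isomorphisms and not merely isomorphisms of abelian groups or one-sided modules. This amounts to observing that the idempotent $e$ acts on the $L$-variable only, hence commutes with the left actions, so the verification is short. The rest is diagram-chasing that I would leave to the reader as "routine".
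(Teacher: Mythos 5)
Your proof is correct and takes essentially the same approach as the paper: reduce to the free case, where the map is the canonical isomorphism $\Hom_R(M,N)\otimes_{\End_R(M)}\End_R(M)^{\oplus r}\to\Hom_R(M,N)^{\oplus r}$, and then observe that the map for $L$ is a direct summand of it. Your explicit intermediate cases $L=M$, $L=M^{\oplus r}$ and the formal deduction of the statement about composition of functors just spell out steps the paper leaves implicit.
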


\begin{proof}
By assumption, $\Hom_{R}(L,M)$ (resp.\ $\Hom_{R}(L,N)$)
is a direct summand of $\End_{R}(M)^{\oplus r}$ (resp.\ $\Hom_{R}(M,N)^{\oplus r}$). 
So \eqref{isom-bimodule} is a direct summand of the isomorphism
\[
\Hom_{R}(M,N) \otimes_{\End_{R}(M)}\End_{R}(M)^{\oplus r} \to \Hom_{R}(M,N)^{\oplus r}.
\]
It follows that \eqref{isom-bimodule} is also an isomorphism.
\end{proof}

The following is a direct consequence of the proposition.

\begin{cor}\label{cor-Morita}[cf. \cite[Corollary 2.5.8]{Quarles-thesis}]
Suppose that
for some positive integers $r$, $s$, $M$ is a direct summand
of $N^{\oplus s}$ and $N$ is a direct summand of $M^{\oplus r}$.
Then the functors 
\begin{align*}
\Hom_{R}(M,N) \otimes_{\End_{R}(M)} -:\End_{R}(M) \lmodules \to \End_{R}(N) \lmodules \\
\Hom_{R}(N,M) \otimes_{\End_{R}(N)} -:\End_{R}(N) \lmodules \to \End_{R}(M) \lmodules 
\end{align*}
are inverses to each other. In particular $\End_{R}(M)$ and $\End_{R}(N)$
are Morita equivalent.
\end{cor}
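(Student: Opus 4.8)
The plan is to deduce the corollary directly from Proposition~\ref{prop-compatible} by composing the two functors in both orders and identifying each composite with an identity functor. First I would apply the proposition with the triple $(L,M,N)$ taken to be $(M,N,M)$: since $M$ is a direct summand of $N^{\oplus s}$ the hypothesis of the proposition is met (with $r$ there equal to $s$), so the composite
\[
\Hom_R(N,M)\otimes_{\End_R(N)}\bigl(\Hom_R(M,N)\otimes_{\End_R(M)} -\bigr)
\]
is canonically isomorphic to $\Hom_R(M,M)\otimes_{\End_R(M)} - = \End_R(M)\otimes_{\End_R(M)} -$, which is canonically the identity functor on $\End_R(M)\lmodules$.

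Symmetrically, I would apply the proposition with $(L,M,N)=(N,M,N)$: since $N$ is a direct summand of $M^{\oplus r}$ the hypothesis holds, so the composite in the other order is canonically isomorphic to $\Hom_R(N,N)\otimes_{\End_R(N)} - = \End_R(N)\otimes_{\End_R(N)} -$, the identity on $\End_R(N)\lmodules$. These two natural isomorphisms exhibit the two functors as mutually inverse equivalences of categories, which is exactly the assertion; Morita equivalence of $\End_R(M)$ and $\End_R(N)$ is then immediate from the definition (an equivalence between the module categories realized by tensoring with bimodules).

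There is essentially no serious obstacle here, since the proposition does all the work; the only point requiring a moment's care is bookkeeping on the bimodule structures, namely checking that the canonical isomorphisms produced by Proposition~\ref{prop-compatible} are isomorphisms of $(\End_R(M),\End_R(M))$- and $(\End_R(N),\End_R(N))$-bimodules compatible with the respective ring multiplications, so that they really do identify the composites with the \emph{identity} functors (and not merely with autoequivalences). This is built into the statement of the proposition, which already records the map \eqref{isom-bimodule} as a bimodule isomorphism, so it suffices to specialize and unwind definitions. One should also remark that the composite of the two tensor functors is computed by \eqref{isom-bimodule} rather than naively, since in general $(B\otimes_C -)\circ(C\otimes_A-)\cong (B\otimes_C C)\otimes_A - \cong B\otimes_A-$ for a $(B,C)$-bimodule and a $(C,A)$-bimodule — but here the relevant tensor product of Hom-bimodules is identified with the single Hom-bimodule precisely by the proposition, which is the content being invoked.
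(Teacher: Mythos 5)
Your argument is exactly the intended deduction: the paper states the corollary as a direct consequence of Proposition~\ref{prop-compatible}, and your two specializations $(L,M,N)=(M,N,M)$ and $(N,M,N)$ (with the hypotheses supplied by the two direct-summand assumptions) identify both composites with $\End_R(M)\otimes_{\End_R(M)}-$ and $\End_R(N)\otimes_{\End_R(N)}-$, i.e.\ the identity functors, just as the paper intends. Your added remark about the bimodule structure of \eqref{isom-bimodule} is correct and fills in the bookkeeping the paper leaves implicit.
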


\begin{prop}\label{prop-idempotent}[cf. \cite[\S 19, Ex.\ 4]{MR1245487}]
Suppose that $M$ is a direct summand of $L$,
and $e \in \End_{R}(L) $ denotes the projection $L \twoheadrightarrow M \subset L$.
Then for any left $\End_{R}(L)$-module $N$, $e N $ is an $\End_{R}(M)$-module
and isomorphic to $\Hom_{R}(L,M) \otimes_{\End_{R}(L)} N$.
\end{prop}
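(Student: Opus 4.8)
The plan is to identify $\End_R(M)$ with the idempotent-cut subring $e\End_R(L)e$ and then realize the functor $\Hom_R(L,M)\otimes_{\End_R(L)}-$ concretely as $N\mapsto eN$. First I would observe that, since $M$ is a direct summand of $L$, the projection $e\in\End_R(L)$ satisfies $e\End_R(L)e\cong\End_R(M)$ canonically (an endomorphism of $L$ that factors through $M$ in both source and target is the same as an endomorphism of $M$), and likewise $\Hom_R(L,M)\cong e\End_R(L)$ as a left $\End_R(M)$- and right $\End_R(L)$-bimodule (a map $L\to M$ is the same as a map $L\to L$ with image in $M$, i.e.\ an element killed by $1-e$ on the left). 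Under these identifications the bimodule $\Hom_R(L,M)$ becomes $e\End_R(L)$.

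Next I would compute, for a left $\End_R(L)$-module $N$,
\[
\Hom_R(L,M)\otimes_{\End_R(L)} N \;\cong\; e\End_R(L)\otimes_{\End_R(L)} N \;\cong\; eN,
\]
the last map being $e a\otimes n\mapsto ean$, with inverse $en\mapsto e\otimes n$; these are mutually inverse and $\End_R(M)$-linear because the $\End_R(M)=e\End_R(L)e$-action on both sides is by left multiplication by $e$-cut elements. This is essentially the content of \cite[\S 19, Ex.\ 4]{MR1245487} cited in the statement, so the argument is the standard one for the functor $eA\otimes_A-$ associated to an idempotent $e$ in a ring $A=\End_R(L)$; I would just spell out that $e\End_R(L)$ is the free-on-one-generator right module cut by $e$, so tensoring with it over $\End_R(L)$ is exactly the functor $N\mapsto eN$.

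The only point requiring care — and the main (very mild) obstacle — is checking that the identifications $e\End_R(L)e\cong\End_R(M)$ and $e\End_R(L)\cong\Hom_R(L,M)$ are compatible with all the relevant module structures, i.e.\ that the left $\End_R(M)$-action on $eN$ (coming from $e\End_R(L)e$) matches the one transported from $\Hom_R(L,M)\otimes_{\End_R(L)}N$. Once the bimodule dictionary is set up this is a line, but it is the place where one could slip. Alternatively, one can deduce the proposition directly from Proposition \ref{prop-compatible}: take there the triple $(L,M,N)$ with $L$ there equal to our $L$, and note $\Hom_R(L,M)\otimes_{\End_R(L)}N$ is what we want, while the idempotent description of $\Hom_R(L,M)$ inside $\End_R(L)$ gives the image $eN$; I would present whichever of the two routes is shorter, likely the direct idempotent computation.
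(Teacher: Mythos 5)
Your proof is correct and follows essentially the same route as the paper: both hinge on identifying $\Hom_{R}(L,M)$ with $e\End_{R}(L)$ inside $\End_{R}(L)$ (the paper does this via the matrix description coming from $L=M\oplus M'$) and then invoking the standard isomorphism $e\End_{R}(L)\otimes_{\End_{R}(L)}N\cong eN$. The only difference is cosmetic: the paper gets the general case by reducing to $N=\End_{R}(L)$ using exactness of the two functors, whereas you write down the explicit mutually inverse maps $ea\otimes n\mapsto ean$ and $en\mapsto e\otimes n$.
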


\begin{proof}
Since the functors $N \mapsto e N$ and $N \mapsto \Hom_{R}(L,M) \otimes_{\End_{R}(L)}N$
are exact, it suffices to show the proposition in the case $N = \End_{R}(L)$,
which we can see as follows. Write $L = M \oplus M'$. 
Then 
\[
\End_{R}(L) =
\begin{pmatrix}
\End_{R}(M) & \Hom_{R} (M',M) \\
\Hom_{R}( M,M' )& \End_{R}(M')
\end{pmatrix}
\text{ and } 
e= \begin{pmatrix}
1 & 0\\
0& 0
\end{pmatrix}.
\]
Hence 
\[
e \End_{R}(L)=
\begin{pmatrix}
\End_{R}(M) & \Hom_{R} (M',M) \\
0&0
\end{pmatrix}
= \Hom_{R}(L,M) . 
\]
\end{proof}

\subsection{Modules of covariants and Frobenius maps}\label{subsec-covariant}

Let $V$ be a $d$-dimensional $k$-vector space
and  $G \subset GL (V)$ a finite subgroup.
We assume the tameness condition that  $p$ does not divide $\sharp G$.
Let $S$ be the symmetric algebra $S^{\bullet} V$ with the natural $G$-action.
Set  $R:= S^{G}$, the invariant ring.

For a finite dimensional 
$G$-representation $U$, $R(U):=(S \otimes_{k} U)^{G}$ is a finitely generated
$R$-module,
called a \emph{module of covariants} (over $R$).
Let  $U_{1},\dots,U_{l}$ be the complete set of irreducible representations.
Then every module of covariants is the direct sum of copies of $R(U_{i})$'s.
We say that $R(U)$ is \emph{full} if it contains every $R(U_{i})$ as a direct summand.
Since $S\cong (k[G] \otimes_{k} S)^{G}$, $S$ is a full module of covariants.
Similarly $S^{q}$ is a full module of covariants over $R^{q}$.

\begin{lem}\label{lem-Bryant}
Let $\fm:= S^{>0}V \subset S $ be the homogeneous maximal ideal
and $\fm^{[q]}$ its $e$-th Frobenius power, that is, 
the ideal of $S$ generated by $f^{q}$, $f \in \fm$.
\begin{enumerate}
\item\label{Bryant-1}
For sufficiently large $q$, the quotient $G$-representation
$S/\fm^{[q]}$ of $S$
contains all irreducible $G$-representations as direct summands.
\item In addition if $G$ is abelian, then the preceding assertion
holds for  $q \ge \sharp G $.
\end{enumerate}
\end{lem}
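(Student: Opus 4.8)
The plan is to reduce the statement to a question about which irreducible $G$-representations occur as constituents (over the tame group algebra $kG$, hence as direct summands) of the truncated polynomial module $S/\fm^{[q]}$, and then to recognize that module as a tensor power of a known $G$-module so that a theorem on symmetric and tensor powers can be applied. First I would observe that since $p \nmid \sharp G$, the group algebra $kG$ is semisimple, so ``contains as a direct summand'' is the same as ``contains as a constituent''; thus it suffices to show that for $q \gg 0$ every irreducible $G$-representation appears in $S/\fm^{[q]}$ with positive multiplicity. Next, because $\fm = S^{>0}V = (x_1,\dots,x_d)$ with $x_i$ a basis of $V$, the ideal $\fm^{[q]}$ is generated by $x_1^{q},\dots,x_d^{q}$, and the quotient $S/\fm^{[q]} \cong k[x_1,\dots,x_d]/(x_1^q,\dots,x_d^q)$ is, as a $G$-module, isomorphic to $(S'V)^{\otimes}$ truncated — more precisely it is the image of the degree-$<q$ part and decomposes as $\bigoplus_{0 \le a_i < q} \CC x_1^{a_1}\cdots x_d^{a_d}$; the key structural point is that $S/\fm^{[q]}$ is naturally a quotient of $S_{< dq}$ and, in characteristic $0$ style arguments replaced by the tame hypothesis, its class in the representation ring can be compared with that of $\mathrm{Sym}^{<q}(V)^{\otimes d}$ or a suitable symmetric power $\mathrm{Sym}^{m}(V)$ for $m$ large.

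For part (1), the substantive input is Bryant's theorem \cite{MR1206130}: for a tame finite group $G$ acting on $V$ with $V^{G}=0$ (or after reducing to that case by splitting off invariants), every irreducible $G$-representation occurs in $\mathrm{Sym}^{m}(V)$ for all sufficiently large $m$, and in fact the multiplicities grow. I would then connect $S/\fm^{[q]}$ to high symmetric powers: the socle (top degree) of $S/\fm^{[q]}$ is spanned by $x_1^{q-1}\cdots x_d^{q-1}$, which is $(\det V)^{\otimes(q-1)}$ twisted appropriately, and multiplication gives surjections $\mathrm{Sym}^{j}(V) \to (S/\fm^{[q]})_{j}$ for $j < q$ that are isomorphisms for $j \le q-1$; hence for $j$ in the range $[q, 2q-2]$ one still has a large piece. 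The cleanest route is: choose $m$ large enough (using Bryant) that $\mathrm{Sym}^{m}(V)$ contains all irreducibles, then choose $q > m$ so that $(S/\fm^{[q]})_{m} = \mathrm{Sym}^{m}(V)$ exactly, which forces $S/\fm^{[q]}$ to contain all irreducibles as summands. So the only real work is bookkeeping to see that the degree-$m$ graded piece of $S/\fm^{[q]}$ coincides with $\mathrm{Sym}^{m}(V)$ when $m < q$ — which is immediate since the relations $x_i^q$ live in degree $q$.

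For part (2), when $G$ is abelian the irreducibles are characters $\chi$, and $S/\fm^{[q]} = \bigoplus_{0 \le a_i < q} k\cdot x^{a}$ where $x^{a}$ has weight $\prod \chi_i^{a_i}$ with $\chi_i$ the weight of $x_i$. One must show every character of $G$ is of the form $\prod \chi_i^{a_i}$ with $0 \le a_i < q$ once $q \ge \sharp G$. Since $G \hookrightarrow GL(V)$ acts faithfully, the characters $\chi_1,\dots,\chi_d$ generate the character group $\widehat{G} \cong G$; and in any finite abelian group of order $n = \sharp G$, every element is a product of generators each used with exponent in $\{0,\dots,n-1\}$ (indeed each element has order dividing $n$, so exponents can be taken mod $n$, i.e. in $[0,n-1] \subset [0,q-1]$). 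Assembling: given a character $\chi$, write $\chi = \prod \chi_i^{b_i}$ with $b_i \in \ZZ$, reduce each $b_i$ modulo the order of $\chi_i$ (which divides $\sharp G \le q$) to get $0 \le a_i < q$ with $\prod \chi_i^{a_i} = \chi$, and then $x^{a} \in S/\fm^{[q]}$ realizes $\chi$. The main obstacle in the whole argument is invoking Bryant's theorem correctly for part (1) — in particular handling the reduction to $V^{G} = 0$ and making the dependence ``sufficiently large $q$'' genuinely effective only through $m$, rather than trying to reprove the representation-stability statement — while part (2) is elementary abelian-group arithmetic once one writes down the weights explicitly.
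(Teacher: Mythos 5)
Your overall route for part (1) is the paper's route (truncate in a degree below $q$, where $S/\fm^{[q]}$ still agrees with the symmetric powers, and feed in Bryant's theorem), and your part (2) is correct and essentially the paper's argument: since the action is faithful the weights $\chi_1,\dots,\chi_d$ of the $V_i$ generate the character group, every character is $\prod\chi_i^{a_i}$ with $0\le a_i<\sharp G\le q$, and the corresponding monomial survives in $S/\fm^{[q]}$. (The paper quotes Bryant for this last representation-theoretic fact, but your elementary duality argument is fine.)

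There is, however, a genuine flaw in the key input you use for part (1): the statement ``every irreducible $G$-representation occurs in $\mathrm{Sym}^{m}(V)$ for all sufficiently large $m$ (with growing multiplicities), at least when $V^{G}=0$'' is false, and the reduction to $V^{G}=0$ does not rescue it. Counterexample: $p$ odd, $G=\{\pm 1\}\subset GL_2(k)$ acting by $-1$ on $V=k^{2}$, so $V^{G}=0$; then every monomial of degree $m$ has weight $(-1)^{m}$, so $\mathrm{Sym}^{m}(V)$ contains only the trivial character for $m$ even and only the sign character for $m$ odd — no single symmetric power contains all irreducibles, no matter how large $m$ is. What Bryant's theorem does give (and what the paper uses) is that for $l$ large the truncated sum $S^{\le l}V=\bigoplus_{j\le l}\mathrm{Sym}^{j}(V)$ contains every irreducible as a direct summand. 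With this corrected input your argument closes exactly as you intended and coincides with the paper's proof: since $\fm^{[q]}=(x_1^{q},\dots,x_d^{q})$ lives in degrees $\ge q$, the graded pieces of $S/\fm^{[q]}$ in degrees $\le l$ are the full symmetric powers, so $S^{\le l}V$ embeds $G$-equivariantly into $S/\fm^{[q]}$ once $q>l$, and semisimplicity of $kG$ (tameness) turns constituents into direct summands. So replace the single symmetric power $\mathrm{Sym}^{m}(V)$ by $S^{\le m}V$ throughout; as written, the step ``choose $m$ so that $\mathrm{Sym}^{m}(V)$ contains all irreducibles'' cannot be carried out in general.
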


\begin{proof}
\begin{enumerate}
\item
It follows from Bryant's theorem \cite{MR1206130} that
for large $l$, 
the set of polynomials of degree at most $l$, $S^{\le l} V \subset S$,
contains all irreducible representations. 
Then if $q \gg l$, 
since  $ \fm^{[q]} \subset S^{>l} V $, 
the natural map $S^{\le l} V \to S/\fm^{[q]}$ 
is injective,
and the assertion follows.
\item 
There is a decomposition of $V$ into 1-dimensional representations,
\[
V =  V_{1} \oplus \dots \oplus V_{d}.
\]
Again from Bryant's theorem, every irreducible representation
is of the form $V_{1}^{\otimes n_{1}} \otimes \cdots \otimes V_{d}^{n_{d}}$ for some $n =(n_{1},\dots,n_{d})$,
$0 \le  n_{i} < \sharp G $. Now it is easy to see the assertion. 
\end{enumerate}
\end{proof}

\begin{prop}\label{prop-full-frobenius}
Suppose that $q$ is large enough as in the preceding lemma.
(In particular, if $G$ is abelian, $q \ge \sharp G$ is enough.)
Then $R$ is a full module of covariants over $R^{q}$.
\end{prop}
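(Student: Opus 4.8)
The plan is to exhibit $R$ as a module of covariants over $R^q$ and then show that it contains every irreducible summand when $q$ is large. First I would use the standard fact that $S$, viewed as an $R^q$-module, decomposes according to the $G$-action. Concretely, $S = S^{\bullet}V$ and $S^q = (S^q)^{\bullet}$ sits inside $S$; since $p \nmid \sharp G$, the $G$-action on $S$ is semisimple, and the key structural input (due to Smith–Van den Bergh, cited as \cite{MR1444312}) is that $S$ as an $R^q = (S^q)^G$-module is a module of covariants: writing $\overline{S} := S/\fm^{[q]} S$ — or more precisely identifying $S$ as $S^q \otimes_{k} (S/\fm^{[q]})$ as $S^q$-modules with compatible $G$-action, by freeness of $S$ over $S^q$ — one gets
\[
R = S^G = \left( S^q \otimes_{k} (S/\fm^{[q]}) \right)^G
\]
with $S^q$ carrying a $G$-action whose invariants are $R^q$. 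Decomposing the finite-dimensional $G$-representation $S/\fm^{[q]}$ into isotypic pieces $\bigoplus_i U_i^{\oplus m_i}$, this exhibits $R$ as $\bigoplus_i (S^q \otimes_k U_i)^{\oplus m_i}{}^G = \bigoplus_i R^q(U_i)^{\oplus m_i}$, i.e.\ a module of covariants over $R^q$.

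The second step is to check fullness, i.e.\ that every $m_i \geq 1$. This is exactly where Lemma \ref{lem-Bryant} enters: for $q$ large enough (and for $q \geq \sharp G$ in the abelian case), $S/\fm^{[q]}$ contains every irreducible $G$-representation as a direct summand, so each multiplicity $m_i$ of $U_i$ in $S/\fm^{[q]}$ is positive. Combined with the decomposition from the first step, this forces each $R^q(U_i)$ to appear in $R$, which is the definition of $R$ being a full module of covariants over $R^q$. One has to be slightly careful that the $G$-isotypic decomposition of $S/\fm^{[q]}$ as an abstract representation is compatible with the $R^q$-module structure coming from $S^q$; but since $S^q$ is $G$-fixed-ring-generated and the splitting $S \cong S^q \otimes_k (S/\fm^{[q]})$ can be chosen $G$-equivariantly (again using $p \nmid \sharp G$ to average, or using a homogeneous $k$-basis of $S/\fm^{[q]}$ consisting of $G$-eigen-type elements lifted to $S$), the two structures interact as a tensor product and the decomposition passes through.

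The main obstacle I anticipate is not the fullness — that is handed to us by Lemma \ref{lem-Bryant} — but rather justifying cleanly that $S$ is, as an $R^q$-module, genuinely a module of covariants in the precise sense needed, i.e.\ producing the $G$-equivariant identification $S \cong S^q \otimes_k (S/\fm^{[q]})$ of $S^q$-modules. The freeness of $S$ over $S^q$ (since $S$ is polynomial and $S^q$ is a polynomial subring over which $S$ is finite and flat, the quotient $S/\fm^{[q]}S$ being a complete intersection Artinian ring with $S$ free over $S^q$ of that rank) is standard, and the $G$-equivariance of a basis-free splitting follows by the usual Reynolds-operator argument available under the tameness hypothesis. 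Granting that, taking $G$-invariants is exact (again tameness), so $R = S^G = (S^q \otimes_k (S/\fm^{[q]}))^G$, and the remaining bookkeeping — distributing invariants over the isotypic decomposition of the finite-dimensional factor — is routine. So the proof reduces to: (1) $S$ is $S^q$-free with fiber $S/\fm^{[q]}$, $G$-equivariantly; (2) take invariants; (3) apply Lemma \ref{lem-Bryant} for fullness.
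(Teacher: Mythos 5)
Your proposal is correct and follows essentially the same route as the paper: the paper likewise invokes Smith--Van den Bergh (Proposition 3.2.1 of \cite{MR1444312}) for the $G$-equivariant identification $S \cong S/\fm^{[q]} \otimes_{k} S^{q}$, takes $G$-invariants to write $R \cong (S/\fm^{[q]} \otimes_{k} S^{q})^{G}$ as a module of covariants over $R^{q}$, and then deduces fullness from Lemma \ref{lem-Bryant}. Your extra discussion of choosing the splitting equivariantly via averaging is just a spelled-out version of the input the paper delegates to that citation.
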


\begin{proof}
Note that the proposition is a direct consequence of \cite[Proposition 3.2.1]{MR1444312}
and the preceding lemma. Indeed since $S$ is isomorphic to 
$S/\fm^{[q]} \otimes_{k} S^{q}$, $R$ is isomorphic to $(S/\fm^{[q]} \otimes_{k} S^{q})^{G}$.
Therefore the proposition follows from the lemma.
\end{proof}

\begin{rem}
If in the non-abelian case, 
we had an effective estimation on how large $q$ is enough in 
Lemma \ref{lem-Bryant}, 
then we would have one in Proposition \ref{prop-full-frobenius} too.
In characteristic zero, the assertion of Lemma \ref{lem-Bryant} 
is valid under the condition $q \ge \sharp G$ even in the non-abelian case 
(see \cite[Prop.\ II.1.3 and Cor.\ II.3.4]{Panyushev-lecture-representation-finite-invariant}).
\end{rem}

\begin{cor}
If $q$ is large enough as above, 
the functors
\[
\Phi:= \Hom_{R^{q}}(S^{q},R) \otimes_{\End_{R^{q}}(S^{q})} - : \End_{R^{q}}(S^{q})\lmodules
\to D_{R,e}\lmodules 
\]
and
\[
 \Hom_{R^{q}}(R, S^{q}) \otimes_{D_{R,e}} - :D_{R,e}\lmodules
\to \End_{R^{q}}(S^{q})\lmodules 
\]
are equivalences which are inverses to each other.
\end{cor}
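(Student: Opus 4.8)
The plan is to deduce this corollary from Corollary~\ref{cor-Morita} by verifying its hypothesis for the pair $(M,N) = (S^{q}, R)$ as $R^{q}$-modules. Recall that Corollary~\ref{cor-Morita} asserts that if $M$ is a direct summand of $N^{\oplus s}$ and $N$ is a direct summand of $M^{\oplus r}$ for some positive integers $r,s$, then the functors $\Hom_{R^{q}}(M,N)\otimes_{\End_{R^{q}}(M)} -$ and $\Hom_{R^{q}}(N,M)\otimes_{\End_{R^{q}}(N)} -$ are mutually inverse equivalences between $\End_{R^{q}}(M)\lmodules$ and $\End_{R^{q}}(N)\lmodules$. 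Taking $M = S^{q}$ and $N = R$ gives exactly the two functors $\Phi$ and its claimed inverse, since $D_{R,e} = \End_{R^{q}}(R)$ by definition. So it suffices to check that each of $S^{q}$ and $R$ is a direct summand of a finite direct sum of copies of the other, as $R^{q}$-modules.

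First I would use the structure theory recalled in \S\ref{subsec-covariant}. Every module of covariants over $R^{q}$ is a direct sum of copies of the $R^{q}(U_{i})$, where $U_{1},\dots,U_{l}$ is the complete list of irreducible $G$-representations, and a module of covariants is \emph{full} precisely when it contains every $R^{q}(U_{i})$ as a summand. By the discussion in the excerpt, $S^{q} \cong (k[G]\otimes_{k} S^{q})^{G}$ is a full module of covariants over $R^{q}$. On the other hand, Proposition~\ref{prop-full-frobenius} (valid since $q$ is large enough as in Lemma~\ref{lem-Bryant}, in particular $q \ge \sharp G$ when $G$ is abelian) says that $R$ is also a full module of covariants over $R^{q}$.

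Now the two summand conditions follow formally. Since $S^{q}$ is full, each $R^{q}(U_{i})$ is a summand of $S^{q}$; as $R$ is a finite direct sum $\bigoplus_i R^{q}(U_{i})^{\oplus a_i}$ of such pieces, $R$ is a direct summand of $(S^{q})^{\oplus r}$ with $r = \sum_i a_i$. Symmetrically, since $R$ is full, each $R^{q}(U_{i})$ is a summand of $R$, and since $S^{q} = \bigoplus_i R^{q}(U_i)^{\oplus b_i}$, the module $S^{q}$ is a direct summand of $R^{\oplus s}$ with $s = \sum_i b_i$. Thus the hypotheses of Corollary~\ref{cor-Morita} hold for $(M,N)=(S^{q},R)$, and we conclude that $\Phi$ and $\Hom_{R^{q}}(R,S^{q})\otimes_{D_{R,e}}-$ are mutually inverse equivalences of abelian categories.

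I do not expect a serious obstacle here: the corollary is genuinely a bookkeeping consequence once fullness of both $R$ and $S^{q}$ is in hand, and the fullness statements are precisely what Proposition~\ref{prop-full-frobenius} and the remark about $S^{q}$ provide. The one point deserving a line of care is identifying the abstract functors of Corollary~\ref{cor-Morita} with the concrete $\Phi$ of the statement — i.e.\ checking that $\End_{R^{q}}(R)$ really is $D_{R,e}$, which is immediate from the convention in the excerpt — and noting that all the relevant $\Hom$ and $\End$ modules are finitely generated so that the tensor-product functors land in the stated module categories.
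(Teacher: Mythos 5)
Your proposal is correct and follows essentially the same route as the paper, which deduces the corollary directly from Corollary~\ref{cor-Morita} and Proposition~\ref{prop-full-frobenius}; you have merely spelled out the fullness bookkeeping (that $R$ and $S^{q}$ are each direct summands of finitely many copies of the other) that the paper leaves implicit.
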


\begin{proof}
This follows from Corollary \ref{cor-Morita} and Proposition \ref{prop-full-frobenius}.
\end{proof}

\begin{cor}
If $q$ is large enough as above, then $ \gldim D_{R,e}= d$.
Furthermore $D_{R,e}$ is a  Cohen-Macaulay $R^{q}$-module. 
The same is true for the completion of $R$ with respect to the maximal ideal $\fm \cap R$.
\end{cor}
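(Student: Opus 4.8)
The plan is to deduce everything from the Morita equivalence $\End_{R^{q}}(S^{q})\lmodules \cong D_{R,e}\lmodules$ established in the previous corollary, together with standard facts about the invariant ring $R^{q}$ and modules of covariants. First I would observe that $R^{q}$, being isomorphic to $R = S^{G}$ (via the Frobenius), is a $d$-dimensional normal Cohen--Macaulay domain, and that $S^{q} \cong S$ is a full module of covariants over it which is moreover a maximal Cohen--Macaulay $R^{q}$-module (since $S$ is a polynomial ring, hence a free, in particular MCM, module over the invariant ring $R = S^{G}$ by the Hochster--Eagon theorem). Van den Bergh's theory (the tame analogue of \cite{MR2077594,MR2057015}, cf.\ the discussion in the introduction) then gives that $A := \End_{R^{q}}(S^{q})$ is a noncommutative crepant resolution of $R^{q}$; in particular $\gldim A = d$ and $A$ is an MCM $R^{q}$-module. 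Transporting along the Morita equivalence $\Phi$ yields $\gldim D_{R,e} = d$.

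For the Cohen--Macaulay statement the key point is that $D_{R,e} = \End_{R^{q}}(R)$ is, as an $R^{q}$-module, a finite direct sum of modules of covariants: by Proposition~\ref{prop-full-frobenius}, $R = \bigoplus_i R^{q}(U_i)^{\oplus a_i}$ for suitable multiplicities $a_i \ge 1$, and $\Hom_{R^{q}}(R^{q}(U),R^{q}(U')) \cong R^{q}(U^{*}\otimes U')$ is again a module of covariants. Hence $D_{R,e}$ decomposes as a direct sum of various $R^{q}(W)$, and each module of covariants $R^{q}(W) = (S \otimes_k W)^{G}$ is a Cohen--Macaulay $R^{q}$-module: it is an $R^{q}$-direct summand of $S \otimes_k W$, which is a free, hence MCM, $R^{q}$-module, and an $R^{q}$-summand of an MCM module is MCM. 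So $D_{R,e}$ is an MCM, in particular CM, $R^{q}$-module.

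For the last sentence I would pass to the $\fm\cap R$-adic completion. Completion is flat and commutes with the relevant $\Hom$'s and $\otimes$'s because all modules in sight ($S$, $S^{q}$, $R$) are finitely generated over the Noetherian ring $R^{q}$; thus $\widehat{D_{R,e}} \cong \End_{\widehat{R^{q}}}(\widehat R)$ and $\widehat{\End_{R^{q}}(S^{q})} \cong \End_{\widehat{R^{q}}}(\widehat{S^{q}})$, and the direct-summand relations between $R$, $S^{q}$ survive completion, so the Morita equivalence of the previous corollary holds verbatim over the completions. Therefore the same argument gives $\gldim \widehat{D_{R,e}} = d$, and the Cohen--Macaulayness is likewise preserved since $\widehat{R^{q}(W)} \cong (\widehat S \otimes_k W)^{G}$ is a summand of the free $\widehat{R^{q}}$-module $\widehat S \otimes_k W$.

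The main obstacle I anticipate is not in the completion bookkeeping but in pinning down precisely the input from Van den Bergh's work that gives $\gldim \End_{R^{q}}(S^{q}) = d$ in positive (tame) characteristic: one must either cite the appropriate statement for NCCRs of tame quotient singularities or reprove that $\End_{R^{q}}(S^{q}) \cong (S^{q} \# G)$-type skew group algebra has global dimension equal to $\dim R^{q} = d$, using that $S^{q}$ is regular and the $G$-action is tame so the skew group ring $S^{q} \rtimes G$ is Morita equivalent to $\End_{R^{q}}(S^{q})$ and has finite global dimension $d$. Once that identification is in hand, everything else is the formal transport of properties along a Morita equivalence together with the elementary fact that modules of covariants are MCM.
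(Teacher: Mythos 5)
Your overall strategy --- transport the global dimension along the Morita equivalence and get Cohen--Macaulayness from a modules-of-covariants decomposition --- is close to the paper's, but there is a genuine gap: you never reduce to the case where $G$ is \emph{small} (no pseudo-reflections), and both of your key inputs require this. The identification $\End_{R^{q}}(S^{q})\cong S^{q}[G]$ (Auslander), and hence the NCCR statement you want to import from Van den Bergh (which is moreover formulated for $G\subset SL$), holds only for small $G$; for a nontrivial reflection group $S^{q}$ is free over $R^{q}$, so $\End_{R^{q}}(S^{q})\cong M_{\sharp G}(R^{q})$, which is \emph{not} Morita equivalent to the skew group ring $S^{q}[G]$ (they have different numbers of simple modules over the branch point), so your fallback sentence as stated is false. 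Likewise the identification $\Hom_{R^{q}}(R^{q}(U),R^{q}(U'))\cong R^{q}(U^{*}\otimes U')$, on which your CM argument hinges, is the standard fact only for small $G$: it is proved by reflexivity plus agreement on the locus where $\Spec S^{q}\to\Spec R^{q}$ is unramified in codimension one, which is exactly where smallness enters. The paper disposes of this by the harmless reduction ``we may assume $d\ge 2$ and $G$ small'' (harmless because $D_{R,e}$ depends only on $R$, and $R$ can be re-presented as the invariant ring of a small group acting on a polynomial ring), and for the CM statement it avoids your Hom-of-covariants identification altogether: it notes that $S^{q}[G]$, being free over $S^{q}$, is CM over $R^{q}$, and that $\End_{R^{q}}(S^{q})$ and $D_{R,e}$ are direct sums of the \emph{same} modules $\Hom_{R^{q}}(R^{q}(U_{i}),R^{q}(U_{j}))$ with everywhere-positive multiplicities (fullness of $S^{q}$ and of $R$), so CM-ness passes from one algebra to the other.

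Two smaller factual slips: $S$ is not free over $R=S^{G}$ (Hochster--Eagon gives CM-ness of $R$; freeness would force $G$ to be a reflection group, the opposite of the relevant case), and $S\otimes_{k}W$ is not a free $R^{q}$-module. What is true, and suffices for your purposes, is that $S$ (resp.\ $S^{q}\otimes_{k}W$) is a CM ring finite over $R^{q}$, hence an MCM $R^{q}$-module, and by tameness the invariants form a direct summand, and a summand of an MCM module is MCM. Your completion argument is fine and is exactly what the paper dismisses as ``obvious.''
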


\begin{proof}
We may and will suppose that $d \ge 2$ and 
that $G$ is small, that is,  has no reflection.
Then $ \End_{R^{q}}(S^{q}) $ is isomorphic to 
the skew group ring $S^{q}[G]$ (see \cite{MR0137733}).
It is well-know that $S^{q}[G]$ has global dimension $d$.
Being Morita equivalent to $S^{q}[G]$, $D_{R,e}$ also has global dimension $d$. 

We easily see that $S^{q}[G]$
is a Cohen-Macaulay $R^{q}$-module.
Since 
\begin{align*}
S^{q}[G] \cong \End_{R^{q}}(S^{q})  \cong \bigoplus_{i,j} \Hom_{R^{q}}(R^{q}(U_{i}),R^{q}(U_{j}))^{a_{ij}}, \ a_{ij} >0,  \\
D_{R,e} \cong \bigoplus_{i,j} \Hom_{R^{q}}(R^{q}(U_{i}),R^{q}(U_{j}))^{b_{ij}}, \ b_{ij} >0, 
\end{align*}
$D_{R,e}$ is also Cohen-Macaulay. 
The last assertion is obvious. 
\end{proof}

\begin{rem}
The Cohen-Macaulayness is the condition corresponding the crepancy
in Van den Bergh's sense (see \cite[Lemma 4.2]{MR2077594}). 
Note however that he defined the noncommutative crepant resolution
only for Gorenstein singularities.
\end{rem}

\section{Moduli spaces of stable objects}

We keep the notation of the preceding section.
Suppose that $q$ is sufficiently large. 

\subsection{Stability}

For $i =1,\dots,l$, define functors
\begin{align*}
\Psi_{1,i}:= \Hom_{R^{q}}(S^{q},R^{q}(U_{i})) \otimes _{\End_{R^{q}}(S^{q})}- : \End_{R^{q}}(S^{q}) \lmodules \to \End _{R^{q}} (R^{q}(U_{i})) \lmodules \\
\Psi_{2,i} : \Hom_{R^{q}}(R,R^{q}(U_{i})) \otimes _{D_{R,e}}- : D_{R,e} \lmodules \to  \End _{R^{q}} (R^{q}(U_{i})) \lmodules .
\end{align*}
From Proposition \ref{prop-compatible}, for each $i$, $\Psi_{1,i}$ and $\Psi_{2,i}$ are compatible with
the equivalence $\Phi$.

\begin{defn}
For a (closed) point $x$ of $X_{e}:=\Spec R^{q}$
and a left $\End_{R^{q}}(S^{q}) \otimes_{R^{q}} k(x)$-module $V$, 
we define the \emph{dimension vector} $\dimbf V := (\dim_{k} \Psi_{1,i}(V) )_{i} \in \ZZ^{l}$.

Let $\lambda \in \ZZ^{l}$.  We say that $V$ is \emph{$\lambda$-(semi)stable} 
 if $(\lambda, \dimbf V)=0$
and for any proper submodule $W$ of $V$, we have $(\lambda, \dimbf W) >0$ ($\ge 0$).
Here $(-,-)$ is the standard inner product. 
We similarly define the dimension vector and the (semi)stability for $D_{R,e} \otimes_{R^{q}} k(x)$-modules
with $\Psi_{2,i}$ instead of $\Psi_{1,i}$.

Given $\alpha \in \Znn^{l}$, we say that $\lambda$ is
\emph{generic with respect to dimension vector $\alpha$}
if every $\lambda$-semistable $V$ with $\dimbf V =\alpha$
is $\lambda$-stable.
\end{defn}

Our stability corresponds to Craw-Ishii's one \cite{MR2078369} as follows.

\begin{defn}
A \emph{$G$-constellation} on $\Spec S^{q}(\cong \AA_{k}^{d})$ 
is an $S^{q}[G]$-module $M$ which is isomorphic to $k[G]$
as a $k[G]$-module. A \emph{$G$-cluster} is a $G$-constellation which is
a quotient of $S^{q}$.
Let $\mathbf{R} (G):= \bigoplus _{i} \ZZ [U_{i}]$ be the representation ring
and 
\[
\theta : \mathbf{R}(G) \to \ZZ
\]
 a map of abelian groups  such that $\theta (k[G]) =0$.
A $G$-constellation $M$ is \emph{$\theta$-stable} (resp.\ \emph{$\theta$-semistable})
if for every proper $S[G]$-submodule $L \subset M$,
$\theta(L) >0$ (resp.\ $\ge 0$) . 
\end{defn}

For such $\theta$, put $\lambda_{\theta} := (\theta(U_{1}^{*}), \dots, \theta(U_{l}^{*}))$. 
Here $U_{i}^{*}$ is the dual representation of $U_{i}$.
A $G$-constellation $M$ naturally becomes an $\End_{R^{q}}(S^{q}) \otimes _{R^{q}} k(x)$-module
for some and unique $x \in X_{e} $.
From the following lemma, $\theta(M) =(\lambda_{\theta}, \dimbf M)$.
Hence
the $\theta$-(semi)stability of the $G$-constellation in the sense of Craw-Ishii
coincides with the $\lambda_{\theta}$-(semi)stability of the $\End_{R^{q}}(S^{q}) \otimes _{R^{q}} k(x)$-module.

\begin{lem}
Let $N$ be an $\End_{R^{q}}(S^{q})$-module and 
$N=\bigoplus_{I=1}^{l} N_{U_{i}}$
the isotypic decomposition of $N$ as a $G$-representation.
(That is, for each $i$, $N_{U_{i}}$ is a direct sum of copies of $U_{i}$.)
Then for each $i$, we have a natural isomorphism 
\[
\Hom_{R^{q}} (S^{q},R^{q}(U_{i})) \otimes_{\End_{R^{q}}(S^{q})} N \cong  (N _{U_{i}^{*}} \otimes U_{i})^{G}.
\]
In particular, if $ N_{U_{i}^{*}} =(U_{i}^{*})^{\oplus r}$ $(r  < \infty)$, then 
$\dim_{k} \Psi_{1,i}(N) =r $.
\end{lem}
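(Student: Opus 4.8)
The plan is to compute $\Hom_{R^{q}}(S^{q}, R^{q}(U_{i})) \otimes_{\End_{R^{q}}(S^{q})} N$ by first identifying the bimodule $\Hom_{R^{q}}(S^{q}, R^{q}(U_{i}))$ in terms of representation-theoretic data, and then carrying out the tensor product explicitly. First I would use the isomorphism $S^{q}\cong (k[G]\otimes_{k}S^{q})^{G}$ (noted already in \S\ref{subsec-covariant}) and the defining formula $R^{q}(U_{i})=(S^{q}\otimes_{k}U_{i})^{G}$. Since $G$ is tame, taking $G$-invariants is exact, so $\Hom_{R^{q}}(S^{q},R^{q}(U_{i}))$ can be rewritten as $\Hom_{S^{q}[G]}(S^{q}\otimes_{k}k[G],\, S^{q}\otimes_{k}U_{i})$, which by adjunction (induction from the trivial subgroup is free) is $S^{q}\otimes_{k}U_{i}$ as a left module over $\End_{R^{q}}(S^{q})\cong S^{q}[G]$, with the $G$-action being the diagonal one on $S^{q}\otimes U_{i}$ and the right $S^{q}[G]$-action coming through the regular representation factor. (More precisely one gets $(S^{q}\otimes_k U_i)^{(\text{as }S^q[G]\text{-mod})}$ with the appropriate side structure; the duality $U_i$ versus $U_i^{*}$ enters here, which is the source of the $U_{i}^{*}$ in the statement.)

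The second step is to compute the tensor product $\bigl(S^{q}\otimes_{k}U_{i}\bigr)\otimes_{S^{q}[G]} N$. Here I would use the standard fact that for an $S^{q}[G]$-module $N$ and the bimodule $S^q\otimes_k W$ (for a $G$-representation $W$), one has $(S^q\otimes_k W)\otimes_{S^q[G]}N \cong (W\otimes_k N)^{G}$, where $N$ is regarded as a $G$-representation via its $S^q[G]$-structure; this is just the "coinvariants $=$ invariants over tame $G$" calculation, $(S^q\otimes_k W)\otimes_{S^q[G]}N = (W\otimes_k N)_G \cong (W\otimes_k N)^G$ since $\sharp G$ is invertible. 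Tracking which representation ($U_i$ or $U_i^{*}$) sits where through the adjunction of Step 1, this yields $(N_{U_i^{*}}\otimes_k U_i)^{G}$: only the $U_i^{*}$-isotypic piece of $N$ contributes, since $(U_j^{*}\otimes U_i)^G=0$ unless $j=i$ by Schur's lemma. That gives the displayed natural isomorphism, and it is manifestly natural in $N$ since every arrow used is.

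For the last sentence: if $N_{U_i^{*}}\cong (U_i^{*})^{\oplus r}$, then $(N_{U_i^{*}}\otimes U_i)^G\cong ((U_i^{*}\otimes U_i)^G)^{\oplus r}\cong k^{\oplus r}$ by Schur (over the algebraically closed field $k$, $\dim_k (U_i^{*}\otimes U_i)^G = \dim_k\End_G(U_i)=1$), so $\dim_k\Psi_{1,i}(N)=r$ as claimed.

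The main obstacle I expect is purely bookkeeping: keeping straight the left versus right module structures and, in particular, the appearance of the \emph{dual} representation $U_i^{*}$ rather than $U_i$ itself. The duality is forced by the adjunction $\Hom_{S^q[G]}(S^q\otimes_k k[G],\,-)\cong \Hom_k(k[G],-)^{\text{(forget)}}$ combined with the self-duality of $k[G]$ only up to the antipode, so one must be careful about whether a $G$-action is via $g$ or $g^{-1}$ at each stage; once the conventions are pinned down the computation is a short diagram of natural isomorphisms and Schur's lemma. There is no genuine difficulty beyond this, since the tameness hypothesis makes all the relevant invariant/coinvariant functors exact and all the relevant modules are (locally) free over $S^q$.
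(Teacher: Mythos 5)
Your argument is correct in substance and rests on the same two pillars as the paper's proof, namely tameness (so that taking $G$-invariants is an exact, direct-summand functor) and Schur's lemma over the algebraically closed field $k$, but it is packaged differently. The paper computes the bimodule directly: $\Hom_{R^{q}}(S^{q},R^{q}(U_{i}))=\Hom_{R^{q}}(S^{q},(S^{q}\otimes U_{i})^{G})\cong(\End_{R^{q}}(S^{q})\otimes U_{i})^{G}\cong(\End_{R^{q}}(S^{q})_{U_{i}^{*}}\otimes U_{i})^{G}$, which is precisely the asserted isomorphism for the free module $N=\End_{R^{q}}(S^{q})$, and then deduces the general case from a free presentation of $N$, using that both sides are right exact in $N$ (the right-hand side even exact, by tameness). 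You instead identify the bimodule through the skew group algebra, via $\End_{R^{q}}(S^{q})\cong S^{q}[G]$ and $S^{q}\otimes_{k}k[G]\cong S^{q}[G]$, and then treat an arbitrary $N$ in one stroke by the coinvariants-equal-invariants computation $(S^{q}[G]\otimes U_{i})^{G}\otimes_{S^{q}[G]}N\cong(N\otimes U_{i})^{G}=(N_{U_{i}^{*}}\otimes U_{i})^{G}$. This avoids the free-presentation step, which is a pleasant simplification, but it costs generality: Auslander's isomorphism $\End_{R^{q}}(S^{q})\cong S^{q}[G]$ requires $G$ to be small, an assumption neither the lemma nor the paper's proof of it needs (though it is harmless in the paper's main application, where $G$ is small).

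Two steps in your sketch should be made explicit. First, the rewriting $\Hom_{R^{q}}(S^{q},R^{q}(U_{i}))\cong\Hom_{S^{q}[G]}(S^{q}\otimes_{k}k[G],S^{q}\otimes_{k}U_{i})$ does not follow merely from exactness of $(-)^{G}$; one must check that the comparison map is bijective, and the clean way is exactly the paper's observation: since $\sharp G$ is invertible, $(S^{q}\otimes U_{i})^{G}$ is a $G$-stable direct summand of $S^{q}\otimes U_{i}$, so $\Hom_{R^{q}}(S^{q},(S^{q}\otimes U_{i})^{G})\cong\Hom_{R^{q}}(S^{q},S^{q}\otimes U_{i})^{G}\cong(\End_{R^{q}}(S^{q})\otimes U_{i})^{G}$, after which your untwisting of the $k[G]$-factor gives the $S^{q}\otimes U_{i}$ description. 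Second, the left/right bookkeeping you flag (where $U_{i}^{*}$ enters) is genuinely the only delicate point, and your Schur-lemma count $\dim_{k}(U_{i}^{*}\otimes U_{i})^{G}=\dim_{k}\End_{G}(U_{i})=1$ for the final assertion agrees with the paper. With these two points written out, your proof is complete.
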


\begin{proof}
We first see
\begin{align*}
\Hom_{R^{q}}(S^{q},R^{q}(U_{i})) & = \Hom_{R^{q}}(S^{q},(S^{q} \otimes U_{i})^{G}) \\
& \cong ( \Hom_{R^{q}}(S^{q},S^{q}) \otimes U_{i})^{G} \\
& \cong (\End_{R^{q}}(S^{q})_{U_{i}^{*}} \otimes U_{i})^{G}.
\end{align*}
Thus the first assertion holds when $N$ is a free module. 
We can prove the general case by using a free presentation of $N$.

The first assertion implies the second. 
\end{proof}

Our stability also corresponds to Van den Bergh's one \cite{MR2077594}
as follows. Write $S^{q}= \bigoplus_{i} R^{q}(U_{i}) ^{\oplus r_{i}} $ and 
$e_{i} :S^{q} \to R^{q}(U_{i})^{\oplus r_{i}} \subset R$
the projections. Then the $e_{i} \in \End_{R^{q}}(S^{q})$ are pairwise orthogonal idempotents
with $\sum _{i} e_{i} =1$. From Proposition \ref{prop-idempotent}, 
for an $\End_{R^{q}}(S^{q})$-module $N$, 
we have 
\[
\Psi_{1,i}(N)^{\oplus r_{i}} \cong 
\Hom_{R^{q}}(S^{q},R^{q}(U_{i})^{\oplus r_{i}}) \otimes_{\End_{R^{q}}(S^{q})} N
\cong e_{i} N.
\] 
Hence if write $r = (r_{1},\dots,r_{l})$, 
 the dimension vector $\dimbf' N$ of  $N$ with respect to the $e_{i}$
in the sense of Van den Bergh is equal to $ r \cdot \dimbf N $ (componentwise multiplication). 
Consequently, for $\lambda \in \ZZ^{l}$, putting $\lambda' := (\lambda_{1}/r_{1} ,\dots \lambda_{l}/r_{l})$,
we have $(\lambda' , \dimbf' N) = (\lambda, \dimbf N)$.
It follows that our stability with respect to $\lambda$
corresponds to Van den Bergh's one with respect to the $e_{i}$ and $\lambda'$.
Similarly for $D_{R,e}$ in place of $\End_{R^{q}}(S^{q})$.

\begin{rem}
The stabilities of Craw-Ishii and Van den Bergh are both derived from King's \cite{MR1315461}
and hence must a priori correspond to each other. 
We just described their stabilities with functors and 
confirmed their compatibility with the equivalence of module categories. 
\end{rem}

\subsection{Moduli space}

Choose $\alpha \in \Znn ^{l}$ and $\lambda \in \ZZ^{l}$
which is generic with respect to $\alpha$.
Let $A$ be either $\End_{R^{q}}(S^{q})$ or $D_{R,e}$.
From \cite{MR2077594}, there exists the fine
moduli space $W$ of $\lambda$-stable $A$-modules
 with dimension vector $\alpha$. 
This is a projective $X_{e}$-scheme, say with the structure morphism
$\pi :W \to X_{e}$. 
Each point $x \in W$ represents 
an isomorphism class of $\lambda$-stable $A\otimes_{R^{q}} k(\pi(x))$-modules with dimension vector $\alpha$.

When $A = \End_{R^{q}}(S^{q})\cong S^{q}[G]$ and when $\alpha = \dimbf k[G]$
and $\lambda=\lambda_{\theta}$ as above, 
then $W$ is the moduli space of $\theta$-stable $G$-constellations.
If $\theta (U_{i}) >0$
for all nontrivial representations $U_{i}$, then $W$ is the moduli space of
the $G$-clusters (see \cite[page 266]{MR2078369} and also \cite{MR1783852}), that is, the $G$-Hilbert scheme.

If we replace $\End_{R^{q}}(S^{q})$ with
$D_{R,e}$ and keep $\alpha$ and $\lambda$ unchanged,
then the resulting moduli space $W'$
is canonically isomorphic to $W$.  For  $W$ and $W'$ are the moduli spaces of stable objects of
two equivalent abelian categories respectively with respect to stability conditions
corresponding to each other. 
Thus the canonical isomorphism $W \to W'$ is nothing but the restriction 
of the equivalence $\Phi$ to objects belonging to $W$.

\subsection{The maps from the $G$-Hilbert scheme to F-blowups}

From Proposition \ref{prop-compatible}, the equivalence $\Phi: \End_{R^{q}}(S^{q})\lmodules \to
D_{R,e} \lmodules$ factors into two functors
\begin{align*}
\Phi_{1} := \Hom_{R^{q}}(S^{q},S) \otimes_{\End_{R^{q}}(S^{q})}- : \End_{R^{q}}(S^{q})\lmodules
\to \End_{R^{q}}(S)\lmodules \\
\Phi_{2} : = \Hom_{R^{q}}(S,R) \otimes_{\End_{R^{q}}(S)}- : \End_{R^{q}}(S)\lmodules \to
D_{R,e}\lmodules .
\end{align*}
For a left $\End_{R^{q}}(S^{q})$-module $M$, we have  isomorphisms of $S$-modules,
\[
\Phi_{1}(M) \cong (\End_{R^{q}}(S^{q}) \otimes_{S^{q}}S) \otimes_{\End_{R^{q}}(S^{q})} M \cong  S \otimes_{S^{q}} M .
\]
Thus if we forget the $\End_{R^{q}}(S)$-module structure
and remember only the $S$-module structure, 
then  $\Phi_{1}$ is just the pull-back by the Frobenius morphism
$\Spec S \to \Spec S^{q}$, which is flat of rank $q^{d}$ since $\Spec S$ is smooth. 

On the other hand, for a left $\End_{R^{q}}(S)$-module $N$,
the group $G$, regarded as a subset of $ \End_{R^{q}}(S)$, acts on
$N$ and we have a natural isomorphism of $R$-modules
\[
\Phi_{2}(N) \cong N^{G}.
\]
Indeed the isomorphism is obvious if $N$ is free.
In the general case, we can show this, considering a free presentation of $N$.
In summary,  as $R$-modules, $\Phi(M) \cong (S \otimes_{S^{q}}M)^{G}$.

If  $\theta :\mathbf{R}(G) \to \ZZ$  is such that
$\theta(U_{i}) >0$ for every nontrivial irreducible representation $U_{i}$,
then as mentioned above,
 the corresponding moduli space $W$ of $G$-constellation is the $G$-Hilbert scheme.
Now the isomorphism $\Phi : W \to W'$ restricted to the irreducible
component of $W$ dominating $X_{e}$
coincides with the isomorphism of the $G$-Hilbert scheme
to the $F$-blowup constructed in \cite{Yasuda:math0706.2700}.
In particular, the $e$-th F-blowup of $\Spec R$ is
an irreducible component of the moduli space $W$
of stable $D_{R,e}$-modules for the stability $\lambda_{\theta}$.

Summarizing, we have:

\begin{thm}
The $e$-th F-blowup of $X=\Spec R$ is (an irreducible component of) the moduli space of certain 
$D_{R,e}$-modules which corresponds to the $G$-Hilbert scheme
via the equivalence $\Phi$.
Hence the isomorphism between the $G$-Hilbert scheme and the F-blowup
is just the restriction of the equivalence. 
\end{thm}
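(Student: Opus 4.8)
The statement is essentially a bookkeeping consequence of everything already established, so the plan is to assemble the pieces rather than prove something new. First I would fix $\theta : \mathbf{R}(G) \to \ZZ$ with $\theta(U_i) > 0$ for every nontrivial irreducible $U_i$, so that the associated $\lambda_\theta \in \ZZ^l$ is, after choosing the dimension vector $\alpha = \dimbf k[G]$, generic with respect to $\alpha$ in the sense of the Definition above (this genericity for the $G$-Hilbert scheme parameters is standard; I would cite the same references used earlier, \cite{MR2078369}, \cite{MR1783852}). This gives, by \cite{MR2077594}, the fine moduli space $W$ of $\lambda_\theta$-stable $\End_{R^q}(S^q)$-modules of dimension vector $\alpha$, which by the identification $\End_{R^q}(S^q) \cong S^q[G]$ and the discussion of $G$-constellations is precisely the moduli of $\theta$-stable $G$-constellations, and under the hypothesis on $\theta$ this is the moduli of $G$-clusters, i.e.\ the $G$-Hilbert scheme $\GHilb(M)$.

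Next I would invoke the equivalence $\Phi : \End_{R^q}(S^q)\lmodules \to D_{R,e}\lmodules$ from the Corollary, together with the compatibility of stabilities worked out in the previous subsection: $\Phi$ carries $\lambda_\theta$-stable objects of dimension vector $\alpha$ to $\lambda_\theta$-stable $D_{R,e}$-modules of the same dimension vector (here one uses that $\Psi_{1,i}$ and $\Psi_{2,i}$ are intertwined by $\Phi$ via Proposition \ref{prop-compatible}, so $\dimbf$ and the inner products $(\lambda_\theta, -)$ are preserved, and stability is preserved submodule-by-submodule since $\Phi$ is an equivalence of abelian categories hence sends the submodule lattice of $V$ isomorphically onto that of $\Phi(V)$). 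Since both $W$ and the moduli space $W'$ of $\lambda_\theta$-stable $D_{R,e}$-modules of dimension vector $\alpha$ are \emph{fine}, the induced bijection on $k$-points promotes to a canonical isomorphism of $X_e$-schemes $W \xrightarrow{\sim} W'$, obtained by applying $\Phi$ to the universal family over $W$ and using the universal property of $W'$ (and conversely with $\Phi^{-1}$).

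Finally I would match the geometry. Using the factorization $\Phi = \Phi_2 \circ \Phi_1$ and the computation $\Phi(M) \cong (S \otimes_{S^q} M)^G$ as an $R$-module, I would identify, for a point of $W$ corresponding to a $G$-cluster $S^q \twoheadrightarrow M$, its image under $\Phi$: the Frobenius pull-back followed by taking $G$-invariants produces exactly the quotient of $R$ defining the corresponding length-$q^d/\sharp G$ (more precisely, appropriate colength) subscheme of $X = \Spec R$, which is the construction of \cite{Yasuda:math0706.2700}. Restricting the isomorphism $W \to W'$ to the unique irreducible component of $W$ dominating $X_e$ then reproduces, on the nose, the isomorphism $\GHilb(M) \cong \FB_e(X)$ of \cite{Yasuda:math0706.2700}; hence that isomorphism is literally the restriction of $\Phi$, and $\FB_e(X)$ is an irreducible component of the $D_{R,e}$-moduli space $W'$.

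The only genuinely delicate point is the last one: verifying that the abstract image $(S \otimes_{S^q} M)^G$ of a $G$-cluster under $\Phi$ coincides, \emph{as a quotient of $R$ together with its scheme structure over $X_e$}, with the F-blowup family of \cite{Yasuda:math0706.2700}, rather than merely agreeing set-theoretically on closed points. I expect this to require tracing through the explicit description of the universal subscheme in \cite{Yasuda:math0706.2700} and checking the surjection $R \twoheadrightarrow (S\otimes_{S^q}M)^G$ is the expected one; the existence of the isomorphism on the dominating component is forced by both sides being birational projective $X_e$-schemes flattening $R$ (hence by the universal property of the birational flattening, as noted in the introduction), but the identification with $\Phi$ specifically is what needs the hands-on comparison.
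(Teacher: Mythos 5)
Your proposal follows essentially the same route as the paper: identify the moduli space $W$ of $\lambda_{\theta}$-stable $\End_{R^{q}}(S^{q})\cong S^{q}[G]$-modules (with $\alpha=\dimbf k[G]$ and $\theta(U_{i})>0$ for nontrivial $U_{i}$) with the $G$-Hilbert scheme, transport it through the equivalence $\Phi$ using the compatibility of the functors $\Psi_{1,i}$, $\Psi_{2,i}$ and of the stabilities, and then use the factorization $\Phi=\Phi_{2}\circ\Phi_{1}$ with $\Phi(M)\cong (S\otimes_{S^{q}}M)^{G}$ to match the restriction to the component dominating $X_{e}$ with the isomorphism of \cite{Yasuda:math0706.2700}, exactly as the paper does. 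The only blemish is the parenthetical numerics at the end: the quotients of $R$ (as $R^{q}$-modules) parametrized by $\FB_{e}(X)$ have length $q^{d}$, not $q^{d}/\sharp G$, but this does not affect the argument.
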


\subsection{Fourier-Mukai transform}

Applying \cite[Theorem 6.3.1]{MR2077594} to our situation,
we obtain: 

\begin{cor}
(We still suppose that $e$ is sufficiently large.)
Let $Y \to X_{e}$ be the $e$-th F-blowup of $X$. 
Suppose that $\dim Y \times_{X} Y \le d+1$ and that $G \subset SL_{d}(k)$.
Then $Y$ is a crepant resolution and we have the equivalence 
\[
\bD(\Coh(Y)) \cong \bD(D_{R,e}\lmodules) 
\]
defined as the Fourier-Mukai transform associated to
the the universal
family of $D_{R,e}$-modules over $Y$.
\end{cor}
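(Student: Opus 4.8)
The plan is to deduce this from Van den Bergh's general machinery, specifically \cite[Theorem 6.3.1]{MR2077594}, which gives a derived equivalence between coherent sheaves on a suitable moduli space and modules over a noncommutative crepant resolution, once one knows (i) the noncommutative ring in question is an NCCR, and (ii) the fibre-dimension hypothesis on the moduli space holds. Both ingredients are essentially in hand from the preceding sections, so the proof should be a matter of checking hypotheses and translating.

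First I would record that under the standing assumptions $G\subset SL_d(k)$ and $q$ sufficiently large, the ring $D_{R,e}$ is a noncommutative crepant resolution of $R^q$: we have already shown $\gldim D_{R,e}=d$ and that $D_{R,e}$ is a Cohen--Macaulay $R^q$-module, and the Cohen--Macaulayness is exactly the crepancy condition in Van den Bergh's sense (as noted in the remark following that corollary); the Gorenstein hypothesis needed to even speak of \emph{crepant} NCCR holds because $R^q\cong R$ and $R=S^G$ is Gorenstein for $G\subset SL_d(k)$. So $D_{R,e}$ qualifies as input to \cite[Theorem 6.3.1]{MR2077594}. Next I would identify $Y$: by the previous theorem, the $e$-th F-blowup of $X$, viewed over $X_e$, is the irreducible component of the moduli space $W'$ of $\lambda_\theta$-stable $D_{R,e}$-modules (with $\theta(U_i)>0$ for all nontrivial $U_i$) that dominates $X_e$, and via $\Phi$ it is isomorphic to the $G$-Hilbert scheme component of $W$. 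Then the hypothesis $\dim Y\times_X Y\le d+1$ is precisely the geometric condition in \cite[Theorem 6.3.1]{MR2077594} ensuring that the tautological complex on $Y$ is a tilting object and that $Y$ is a genuine crepant resolution; I would simply invoke that theorem to conclude both that $Y\to X_e$ is a crepant resolution and that the Fourier--Mukai functor associated to the universal family $\cU$ of $D_{R,e}$-modules on $Y$, $-\otimes^{\mathbf L}_{\cO_Y}\cU : \bD(\Coh(Y))\to \bD(D_{R,e}\lmodules)$, is an equivalence.

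The only genuinely nontrivial point is making sure Van den Bergh's theorem applies verbatim to $D_{R,e}$ rather than to the skew group ring $S^q[G]$ for which it was originally phrased. Here the key is that $D_{R,e}$ and $\End_{R^q}(S^q)\cong S^q[G]$ are Morita equivalent (the corollary in \S\ref{subsec-covariant}), and that this Morita equivalence $\Phi$ matches the stability conditions and dimension vectors on the two sides (as checked in \S3.1), so it carries the moduli space $W$ for $S^q[G]$ isomorphically onto $W'$ for $D_{R,e}$ together with the universal families. Thus Van den Bergh's construction of the tilting bundle and the derived equivalence transports along $\Phi$; concretely, the tautological $D_{R,e}$-bundle on $Y=W'$ is the image under $\Phi$ (applied fibrewise) of the tautological $S^q[G]$-bundle on $W$, and the Fourier--Mukai kernel on the $D_{R,e}$ side is obtained from the $S^q[G]$-side kernel by tensoring with the Morita bimodule $\Hom_{R^q}(S^q,R)$. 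The main obstacle, then, is purely bookkeeping: verifying that all the hypotheses of \cite[Theorem 6.3.1]{MR2077594}---finite global dimension, the CM/crepancy condition, and the fibre-product dimension bound---are stable under this Morita transfer and that the resulting equivalence is the stated Fourier--Mukai transform. Once that is spelled out, the corollary follows immediately.
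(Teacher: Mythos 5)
Your proposal is correct and follows essentially the same route as the paper: the paper's entire proof is the single line ``Applying \cite[Theorem 6.3.1]{MR2077594} to our situation,'' with the hypothesis checks (gldim and Cohen--Macaulayness of $D_{R,e}$, the Morita equivalence with $S^{q}[G]$, the matching of stabilities, and the identification of $Y$ with the relevant component of the moduli space of $D_{R,e}$-modules) already supplied by the preceding sections, exactly the ingredients you spell out.
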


\section{F-blowups of F-pure singularities}

Let $R$ be a commutative finitely generated domain over $k$ of dimension $d$.
Suppose that $R$ is F-pure, that is, the inclusion map $R^{p} \hookrightarrow R$ splits
(as an $R^{p}$-homomorphism).
Then for any $q=p^{e}$, $R^{q} \hookrightarrow R$ splits.
So we write $R = R^{q} \oplus M $ with $M \subset R$ an $R^{q}$-submodule.
Let $e_{1},e_{2} \in \End_{R^{q}}(R)$ be the projections 
$e_{1}:R \to R^{q} \subset R$ and $e_{2}: R \to M \subset R$ respectively.
Then we can consider dimension vectors and the stability
with respect to $e_{1}$ and $e_{2}$ in Van den Bergh's sense. 

\begin{prop}
Put $\alpha := (1,q^{d}-1)$ and $\lambda:= (1-q^{d},1)$.
Then the $e$-th F-blowup of $X$ is
canonically isomorphic to the unique irreducible component
dominating $X_{e}$ of 
the moduli scheme $W$ of $\lambda$-stable $D_{R,e}$-modules
with dimension vector $\alpha$.
\end{prop}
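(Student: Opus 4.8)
The plan is to identify the moduli space $W$ of $\lambda$-stable $D_{R,e}$-modules with the F-blowup by unwinding the definitions of both sides and showing that a $\lambda$-stable $D_{R,e}$-module with dimension vector $\alpha=(1,q^d-1)$ is exactly the data of a length-one $R$-submodule-quotient pattern that the F-blowup parametrizes. First I would recall that $D_{R,e}=\End_{R^q}(R)$ and that $R=R^q\oplus M$ as $R^q$-modules gives the orthogonal idempotents $e_1,e_2$ with $e_1+e_2=1$. For a $D_{R,e}\otimes_{R^q}k(x)$-module $N$, the dimension vector in Van den Bergh's sense is $(\dim_k e_1N,\dim_k e_2N)$. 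Since $e_1D_{R,e}\cong\Hom_{R^q}(R,R^q)$ by Proposition \ref{prop-idempotent}, a natural module to test is $N=R\otimes_{R^q}k(x)$ itself, which has total dimension $q^d$ (as $R$ is finite flat of rank $q^d$ over $R^q$ on the smooth locus) and $e_1$-part of dimension $1$ because $e_1(R\otimes k(x))\cong R^q\otimes_{R^q}k(x)=k(x)$. So $\dimbf' (R\otimes_{R^q}k(x))=(1,q^d-1)=\alpha$ generically.

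Next I would check the stability. With $\lambda=(1-q^d,1)$ one has $(\lambda,\alpha)=(1-q^d)\cdot1+1\cdot(q^d-1)=0$, so the numerical condition $(\lambda,\dimbf' N)=0$ holds. For a proper nonzero submodule $W\subset N$ with $\dimbf' W=(a,b)$, $\lambda$-stability demands $(1-q^d)a+b>0$, i.e. $b>(q^d-1)a$. Since $0\le a\le 1$ and $0\le b\le q^d-1$, the only way to violate this is $a=1$, $b\le q^d-1$, forcing $W=N$; hence every module with dimension vector $\alpha$ is automatically $\lambda$-stable (in particular $\lambda$ is generic for $\alpha$), and the moduli space $W$ exists as a projective $X_e$-scheme by \cite{MR2077594}. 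I would then identify the $D_{R,e}$-modules in question with cyclic quotients: a $D_{R,e}\otimes_{R^q}k(x)$-module $N$ with $\dimbf' N=\alpha$ has $\dim_k e_1N=1$, and using $e_1D_{R,e}\cong\Hom_{R^q}(R,R^q)$ together with the surjection $D_{R,e}e_1\twoheadrightarrow N$ (which exists since $e_1N\ne0$ and any generator of $e_1N$ generates $N$ by simplicity) one sees $N$ is a quotient of $D_{R,e}e_1\cong R$ as an $R$-module. Conversely any length-$q^d$ quotient $R\twoheadrightarrow N$ that is a $D_{R,e}$-module with one-dimensional $e_1$-part gives such a point. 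This is precisely the moduli functor defining $\FB_e(X)$: quotients $R\otimes_{R^q}k(x)\twoheadrightarrow N$ with $N$ of length $q^d$, i.e. the scheme-theoretic fibers of the universal family over $\FB_e(X)\subset\Hilb(X)$.

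The main obstacle I expect is the last matching step: showing that the $D_{R,e}$-module structure on a length-$q^d$ quotient $N$ of $R$ is no extra data beyond the $R$-module structure, so that the moduli of such $D_{R,e}$-modules coincides scheme-theoretically (not just set-theoretically) with the F-blowup. Here I would argue that on the locus dominating $X_e$, where $R$ is locally free of rank $q^d$ over $R^q$, one has $D_{R,e}\otimes_{R^q}k(x)\cong\End_{k(x)}(R\otimes_{R^q}k(x))$, a matrix algebra, whose modules with the correct dimension vector are rigid; so over this component the $D_{R,e}$-action is determined, and the universal family of $D_{R,e}$-modules restricts to the universal quotient family, giving the canonical isomorphism with $\FB_e(X)$ exactly as in the tame quotient case treated in Section 3 (via the analogue of the functor $\Phi$, here the splitting identifies the relevant modules of covariants). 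The irreducibility and uniqueness of the dominating component then follows because $\FB_e(X)$ is by construction the closure of the graph of the Frobenius over the smooth locus, hence irreducible and dominating $X_e$. I would close by remarking that the F-pure hypothesis is used precisely to produce the splitting $R=R^q\oplus M$ and hence the idempotents $e_1,e_2$ needed to phrase the stability.
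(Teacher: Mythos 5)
There is a genuine gap, and it sits at the heart of the argument: the stability step. You claim that every $D_{R,e}\otimes_{R^q}k(x)$-module $N$ with $\dimbf' N=\alpha=(1,q^d-1)$ is automatically $\lambda$-stable, arguing that a destabilizing submodule would need $a=1$, ``forcing $W=N$.'' That is a non sequitur: a proper submodule with dimension vector $(1,b)$, $b<q^d-1$, is not excluded by counting dimensions, and such submodules do occur in general (e.g.\ at a singular point one can have $N=N_1\oplus N_2$ with $e_1N_2=0$, since $D_{R,e}e_1D_{R,e}\subsetneq D_{R,e}$ precisely when $R$ is not $R^q$-projective, by Kunz). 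Excluding them is exactly what stability asserts, so your argument is circular. What the paper actually proves is weaker and is the key lemma: any module of dimension vector $\alpha$ that is a \emph{quotient of} $R\otimes_{R^q}k(x)$ is $\lambda$-stable, because the image of $1$ spans the one-dimensional $e_1$-part and generates the module over $D_{R,e}\supseteq R$, so any submodule with nonzero $e_1$-part is the whole module; hence every nonzero proper submodule has dimension vector $(0,r)$ and pairs positively with $\lambda$. The same confusion reappears in your cyclicity step: ``any generator of $e_1N$ generates $N$ by simplicity'' is wrong, since $\lambda$-stable modules here are typically not simple (submodules of dimension vector $(0,r)$ are allowed by stability, as $(\lambda,(0,r))=r>0$). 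Cyclicity over $D_{R,e}e_1\cong R$ is a consequence of stability, not of simplicity: if $D_{R,e}e_1N\subsetneq N$ it would have dimension vector $(1,b)$ with $b<q^d-1$ and destabilize. Your genericity check for $\lambda$ and the computation $\dimbf'(R\otimes_{R^q}k(x))=\alpha$ at smooth points are fine and agree with the paper.

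On the final identification, your rigidity argument (matrix algebra over the smooth locus, then take closures) is roughly as terse as the paper's own ``the F-blowup is a subscheme of $W$,'' so I would not count it as an additional gap; but note that the scheme-theoretic comparison away from the smooth locus again rests on stability rather than matrix-algebra rigidity: for a $\lambda$-stable module the surjection $R\otimes_{R^q}k(x)\cong D_{R,e}e_1\otimes_{R^q}k(x)\twoheadrightarrow N$ is determined up to scalar by $N$, because any nonzero vector of the one-dimensional space $e_1N$ generates. So the fix is to replace ``dimension vector $\alpha$ implies stable/simple'' throughout by the paper's two assertions: (i) at smooth points the fiber of $D_{R,e}$ is $M_{q^d\times q^d}(k)$, whose unique $q^d$-dimensional module is $R\otimes_{R^q}k(x)$ with dimension vector $\alpha$, giving the unique dominating component; (ii) quotients of $R\otimes_{R^q}k(x)$ with dimension vector $\alpha$ are $\lambda$-stable by the generation argument above.
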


\begin{proof}
First note that $\lambda$ is generic with respect to $\alpha$,
since for any $0 < \alpha' < \alpha $, $(\alpha',\lambda)\ne 0$.
The proof here is a modification of Craw-Ishii's argument \cite[page 266]{MR2078369}.
If $x \in X_{e}$ is a smooth point, then $D_{R,e}\otimes_{R^{q}} k(x) \cong 
M_{q^{d} \times q^{d}}(k)$ and there exists one and only one $D_{R,e} \otimes k(x)$-module
of $k$-dimension $q^{d}$ modulo isomorphisms.
A canonical representative of it is $R \otimes _{R^{q}} k(x)$ with the canonical
$D_{R,e} \otimes k(x)$-module structure. 
It has dimension vector $(1,q^{d}-1)$ with respect to the idempotents
$e_{1},e_{2}$. It follows that $W$ has a unique irreducible component
dominating $X_{e}$.

If $x \in X_{e}$ is an arbitrary point, then any $D_{R,e} \otimes k(x)$-module
$M$ of dimension vector $\alpha$
which is a quotient of $R \otimes _{R^{q}} k(x)$ is $\lambda$-stable.
Indeed if $M' \subset M$ is a submodule with $e_{1}M' \ne 0$,
then we must have $M' = M$. 
Hence for every nonzero proper submodule $L \subset M$, we have
$\dimbf L = (0, r)$, $r >0$ and $(\lambda, \dimbf L) =r >0$. 
Thus $M$ is $\lambda$-stable.
It follows that the $e$-th F-blowup is a subscheme of $W$,
which completes the proof.
\end{proof}

\begin{rem}
The F-blowup of F-pure singularities has another nice property. Namely 
the sequence of F-blowups satisfies the monotonicity \cite{yasuda-monotonicity}.
\end{rem}

\section{1-dimensional analytically irreducible singularities}

Let $R$ be a 1-dimensional complete integral domain over $k$
with the normalization $S :=k[[x]]$. 

\begin{lem}
For any $q$, we have ring isomorphisms
\[
\End _{R^{q}} (S^{q}) \cong \End_{S^{q}} (S^{q}) \cong S^{q}.
\]
\end{lem}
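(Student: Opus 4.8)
The plan is to identify the intermediate ring $\End_{R^q}(S^q)$ by observing that the relevant ring of endomorphisms only sees the module structure over the smaller ring, and that $S^q$ contains as well as is contained in enough copies of itself. First I would recall the setup: $R$ is a $1$-dimensional complete integral domain with normalization $S=k[[x]]$, so $S$ is a DVR finite over $R$, and hence $S^q = k[[x^q]]$ (or more precisely the subring of $q$-th powers, which is again a complete DVR) is finite over $R^q$. The key point is that $R \subseteq S$ forces $R^q \subseteq S^q$, and both $R$ and $S$ have the same fraction field up to finite extension; in any case $S^q$ is a finite $R^q$-module and a domain, with $S^q$ and $R^q$ sharing the same fraction field (since $S$ is the normalization of $R$, $\operatorname{Frac}(S) = \operatorname{Frac}(R)$, and raising to the $q$-th power is injective on fields of characteristic $p$).

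Next I would compute the two isomorphisms separately. For $\End_{S^q}(S^q) \cong S^q$: this is completely formal — the endomorphism ring of a ring as a module over itself, acting by right multiplication, is canonically the ring itself (here everything is commutative so left/right does not matter). For $\End_{R^q}(S^q) \cong \End_{S^q}(S^q)$: the inclusion $\End_{S^q}(S^q) \hookrightarrow \End_{R^q}(S^q)$ is clear, since an $S^q$-linear map is a fortiori $R^q$-linear. For the reverse, take $\varphi \in \End_{R^q}(S^q)$; I want to show $\varphi$ is automatically $S^q$-linear. Since $S^q$ is a domain and $\varphi$ is determined by $\varphi(1) =: a \in S^q$ after extending scalars to the common fraction field $K := \operatorname{Frac}(S^q) = \operatorname{Frac}(R^q)$ — indeed $\varphi \otimes_{R^q} K$ is a $K$-linear endomorphism of the $1$-dimensional $K$-vector space $S^q \otimes_{R^q} K = K$, hence is multiplication by some $a \in K$ — and $\varphi$ is the restriction of that scalar multiplication to $S^q$, we get $\varphi(s) = as$ for all $s \in S^q$, which in particular (taking $s=1$) shows $a \in S^q$ and exhibits $\varphi$ as multiplication by the element $a \in S^q$, hence $S^q$-linear. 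Therefore $\End_{R^q}(S^q) = \End_{S^q}(S^q) = S^q$, and all three are canonically isomorphic as rings.

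The step I expect to be the only real (mild) obstacle is justifying that an $R^q$-linear endomorphism of $S^q$ is multiplication by an element of $\operatorname{Frac}(S^q)$: this requires that $S^q$ be torsion-free over $R^q$ (true, as it is a domain) and that the natural map $S^q \to S^q \otimes_{R^q} \operatorname{Frac}(R^q)$ be injective with one-dimensional target, which holds because $\operatorname{Frac}(S) = \operatorname{Frac}(R)$ (normalization does not change the fraction field) and $x \mapsto x^q$ is injective in characteristic $p$, so $\operatorname{Frac}(S^q) = \operatorname{Frac}(R^q)$. Everything else is formal. (One could alternatively phrase this via the fact that $S^q$, being a normal domain finite and birational over $R^q$ in its fraction field, satisfies $\operatorname{Hom}_{R^q}(S^q, S^q) = S^q$ as the largest ring between $R^q$ and $\operatorname{Frac}(R^q)$ in which $S^q$ is an ideal — but the direct fraction-field argument above is the shortest.)
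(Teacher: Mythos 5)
Your proof is correct and rests on the same underlying idea as the paper's: since $S$ is the normalization of $R$, the rings $R^q$ and $S^q$ share a fraction field, so any $R^q$-linear endomorphism of $S^q$ is forced to be multiplication by $\varphi(1)\in S^q$. The paper reaches the identity $\varphi(s)=s\varphi(1)$ by clearing denominators explicitly (choosing $n$ with $x^{n}, x^{n}s\in R^{q}$ and cancelling $x^{n}$ in the domain $S^q$), whereas you tensor with $\operatorname{Frac}(R^q)$ and identify the generic fiber as a one-dimensional vector space; this is only a difference in packaging, not in substance.
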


\begin{proof}
The second isomorphism is trivial. For the first one, 
we first see 
\[
\End_{R^{q}} (S^{q}) \supset \End_{S^{q}}(S^{q}).
\]
Take $ \phi \in \End_{R^{q}} (S^{q})$. For any $ s \in S^{q}$,
there exists $n \in q \Znn$ with $x^{n},x^{n}s \in R^{q}$.
Then
\[
x^{n} \phi (s) = \phi(x^{n}s) =  x^{n}s\phi(1) .
\]
Hence $\phi(s) = s \phi(1)$ and $\phi \in \End_{S^{q}}(S^{q})$.
Thus we have proved the lemma.
\end{proof}

\begin{thm}
Suppose that $x^{q} \in R$. (This holds for sufficiently large $q$.)
Then we have the ring isomorphism
\[
D_{R,e} \cong M_{q \times q}(S^{q}).
\]
\end{thm}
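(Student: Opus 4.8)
The plan is to establish the chain of ring inclusions $R^{q}\subseteq S^{q}\subseteq R\subseteq S$, to deduce from it that $D_{R,e}=\End_{R^{q}}(R)$ coincides with $\End_{S^{q}}(R)$, and then to apply the structure theory of finitely generated modules over the discrete valuation ring $S^{q}$. For the elementary bookkeeping: since $k$ is perfect and $q=p^{e}$ we have $S^{q}=k[[x^{q}]]$, which is a complete discrete valuation ring (hence a principal ideal domain), and $S=k[[x]]$ is free of rank $q$ over it. The normalization $S$ of the $1$-dimensional complete local domain $R$ is module-finite over $R$, so $x^{c}S\subseteq R$ for some $c$; raising to the $q$-th power gives $x^{qc}S^{q}=(x^{c}S)^{q}\subseteq R^{q}$, so the conductor $\fc:=(R^{q}:_{S^{q}}S^{q})$ is a nonzero ideal of $S^{q}$, and it is contained in $R^{q}$. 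Moreover, because $x^{q}\in R$, $k\subseteq R$, and $R$ is a complete local ring in which $x^{q}$ is a non-unit, the power series subring $k[[x^{q}]]=S^{q}$ lies in $R$; and $R\subseteq S$ trivially gives $R^{q}\subseteq S^{q}$.

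The one step with genuine content is the identification $\End_{R^{q}}(R)=\End_{S^{q}}(R)$. The inclusion $\supseteq$ is immediate from $R^{q}\subseteq S^{q}$. For $\subseteq$, take $\phi\in\End_{R^{q}}(R)$, $s\in S^{q}$ and $f\in R$, and choose a nonzero $b\in\fc$; then $b\in R^{q}$, $bs\in R^{q}$ and $sf\in R$, whence
\[
b\,\phi(sf)=\phi\bigl((bs)\,f\bigr)=(bs)\,\phi(f)=b\,\bigl(s\,\phi(f)\bigr).
\]
Cancelling the nonzero $b$ in the domain $R$ gives $\phi(sf)=s\,\phi(f)$, so $\phi$ is $S^{q}$-linear. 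This denominator-clearing argument is of exactly the same flavour as the proof of the preceding lemma (now applied to $R$ in place of $S^{q}$), and I expect it to be the only real obstacle; the rest is formal.

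Finally, $R$ is a finitely generated torsion-free module over the principal ideal domain $S^{q}$ --- finitely generated because it is an $S^{q}$-submodule of the module-finite $S^{q}$-module $S$, and torsion-free because it is a domain containing $S^{q}$ --- hence free. Its rank is $q$: localizing at $S^{q}\setminus\{0\}$ turns $R$ into $\operatorname{Frac}(R)=k((x))$, which has dimension $q$ over $\operatorname{Frac}(S^{q})=k((x^{q}))$. Thus $R\cong(S^{q})^{\oplus q}$ as an $S^{q}$-module, and combining this with the previous step,
\[
D_{R,e}=\End_{R^{q}}(R)=\End_{S^{q}}(R)\cong\End_{S^{q}}\bigl((S^{q})^{\oplus q}\bigr)\cong M_{q\times q}(S^{q}),
\]
as claimed.
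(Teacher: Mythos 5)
Your proof is correct and follows essentially the same route as the paper: identify $R$ as a free $S^{q}$-module of rank $q$ and observe that $R^{q}$-linear endomorphisms are automatically $S^{q}$-linear by a denominator-clearing argument through the conductor. The only cosmetic difference is that you prove $\End_{R^{q}}(R)=\End_{S^{q}}(R)$ up front, whereas the paper decomposes $R\cong (S^{q})^{\oplus q}$ as $R^{q}$-modules and then invokes its preceding lemma $\End_{R^{q}}(S^{q})\cong S^{q}$ entrywise, the lemma being proved by exactly the same trick.
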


\begin{proof}
By assumption, $S^{q} \subset R$. 
Being a torsion-free $S^{q}$-module of rank $q$,
$R$ is isomorphic to $(S^{q})^{\oplus q}$
as an $S^{q}$-module and as an  $R^{q}$-module too.
It follows that 
\[
D_{R,e}\cong \End_{R^{q}}((S^{q})^{\oplus q}) \cong M_{q \times q} (\End_{R^{q}}(S^{q})) \cong M_{q \times q} (S^{q}) .
\]
\end{proof}

\begin{cor}
For sufficiently large $e$, $\gldim D_{R,e} = 1$.
Furthermore $D_{R,e}$ is a Cohen-Macaulay $R^{q}$-module.
\end{cor}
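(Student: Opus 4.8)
The plan is to deduce everything from the structure result $D_{R,e}\cong M_{q\times q}(S^{q})$ established in the preceding theorem, which holds as soon as $x^{q}\in R$; and this latter condition holds for all sufficiently large $q$ because the conductor of $R$ in its normalization $S=k[[x]]$ is a nonzero ideal, so some power $x^{N}$, hence $x^{q}$ for every $q\ge N$, lies in $R$.

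For the global dimension, I would first record that $S^{q}=k[[x]]^{q}$ is, via the $q$-th power map $f\mapsto f^{q}$, isomorphic as a ring to $k[[x]]$, a discrete valuation ring, hence a regular local ring of Krull dimension $1$; so $\gldim S^{q}=1$. Since $M_{q\times q}(S^{q})$ is Morita equivalent to $S^{q}$, and Morita equivalence preserves global dimension, we conclude $\gldim D_{R,e}=\gldim M_{q\times q}(S^{q})=\gldim S^{q}=1$.

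For the Cohen--Macaulay assertion, I would observe that $M_{q\times q}(S^{q})\cong (S^{q})^{\oplus q^{2}}$ as an $R^{q}$-module, so it suffices to show that $S^{q}$ is a maximal Cohen--Macaulay $R^{q}$-module. Now $R^{q}$ is a $1$-dimensional complete local domain (it is a ring-theoretic copy of $R$). Since $R$ is a complete local Noetherian domain it is excellent, so its normalization $S$ is a finite $R$-module; applying the $q$-th power map, $S^{q}$ is a finite $R^{q}$-module, and it is torsion-free as it sits inside a field extension of $\mathrm{Frac}(R^{q})$. A finitely generated torsion-free module over a $1$-dimensional local domain has depth $1$, hence is maximal Cohen--Macaulay, and a finite direct sum of maximal Cohen--Macaulay modules is again maximal Cohen--Macaulay; therefore $D_{R,e}$ is a Cohen--Macaulay $R^{q}$-module.

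I do not expect a real obstacle: the corollary is essentially a formal consequence of $D_{R,e}\cong M_{q\times q}(S^{q})$. The only points that merit a sentence of care are the finiteness of the normalization $S$ over $R$ (so that $S^{q}$ is finite over $R^{q}$) and the identification of $S^{q}$ with a discrete valuation ring; both are standard.
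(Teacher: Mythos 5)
Your proposal is correct and follows essentially the same route as the paper: deduce $\gldim D_{R,e}=1$ from the isomorphism $D_{R,e}\cong M_{q\times q}(S^{q})$ and Morita equivalence with the discrete valuation ring $S^{q}$, and obtain Cohen--Macaulayness from the fact that $M_{q\times q}(S^{q})$ is a finite direct sum of copies of the torsion-free $R^{q}$-module $S^{q}$. The paper states these points tersely (``well-known'', ``clear''); your write-up merely supplies the standard justifications, including the conductor argument for $x^{q}\in R$ at large $q$.
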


\begin{proof}
It is well-known that $M_{q \times q} (S^{q})$ is Morita equivalent to $S$.
Hence $D_{R,e}$ has global dimension 1.

It is clear that $D_{R,e} \cong  M_{q \times q}(S^{q})$ is a Cohen-Macaulay
$R^{q}$-module. 
\end{proof}

\begin{prob}
For large $e$, the $e$-th $F$-blowup of a curve
separates all the analytic branches \cite{Yasuda:math0706.2700}.
Is there a categorical counterpart of separation of branches?
And can we generalize the result to arbitrary 1-dimensional singularity?
\end{prob}

See the next section for the node $R=k[[x,y]]/(x^{2}+y^{2})$
in odd characteristic.

\section{The simple singularity of type $A_{1}$}

A hypersurface singularity $R=k[[x_{0}, \dots ,x_{d}]]/(f)$, 
which is necessarily a reduced ring, is called \emph{simple},
if it is of finite Cohen-Macaulay type, that is, 
it has only finitely many indecomposable MCM (maximal Cohen-Macaulay) modules
up to isomorphisms. See \cite{MR1033443} for the classification of simple singularities
in positive characteristic.

For such $R$,  an MCM $R$-module $M$ is called a \emph{representation generator}
if $M$ contains every indecomposable MCM module as a direct summand. 
The following theorem of Leuschke \cite{MR2310620} provides a useful
sufficient condition for an endomorphism ring to have finite global dimension.

\begin{thm}
For $R$ as above, if $M$ is a representation generator,
then 
\[
\gldim \End_{R}(M) \le \max\{2,d\}.
\] 
Moreover if $d \ge 2$, then the equality holds. 
\end{thm}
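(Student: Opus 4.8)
The plan is to compute $\gldim\Lambda$ for $\Lambda:=\End_R(M)$ as the supremum of the projective dimensions of the simple left $\Lambda$-modules, and to read off each such projective dimension from the representation theory of the category $\MCM(R)$ of maximal Cohen--Macaulay $R$-modules. Since $R$ is complete the Krull--Schmidt property holds, and since $R$ has finite CM type it is an isolated singularity (Auslander). As global dimension is Morita invariant, I may replace $M$ by its multiplicity-free version and assume $M=\bigoplus_{i=0}^{n}N_i$ with $N_0=R$ and the $N_i$ the distinct indecomposable MCM modules. The functor $\Hom_R(M,-)$ then restricts to an equivalence $\MCM(R)\xrightarrow{\sim}\mathrm{proj}\,\Lambda$ sending $N_i$ to the indecomposable projective $P_i:=\Hom_R(M,N_i)$, so the simple $\Lambda$-modules are exactly $S_i=P_i/\mathrm{rad}\,P_i$ and $\gldim\Lambda=\max_i\mathrm{pd}_\Lambda S_i$.

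First I would dispose of the non-free vertices. For each $i\ge1$ the module $N_i$ is a non-free indecomposable, so (using completeness and the isolated-singularity hypothesis) there is an Auslander--Reiten sequence
\[
0\to \tau N_i \to E_i \xrightarrow{\beta_i} N_i \to 0
\]
in $\MCM(R)$. Applying the left-exact functor $\Hom_R(M,-)$ and using that $\beta_i$ is minimal right almost split --- so that a map $M\to N_i$ lies in the image of $\Hom_R(M,\beta_i)$ precisely when it is not a split epimorphism, whence the cokernel is the simple module $S_i$ --- I obtain an exact sequence
\[
0\to \Hom_R(M,\tau N_i)\to \Hom_R(M,E_i)\to \Hom_R(M,N_i)\to S_i\to 0.
\]
The three left-hand terms are projective because $\tau N_i$, $E_i$, $N_i$ are MCM, so this is a projective resolution and $\mathrm{pd}_\Lambda S_i\le 2$.

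The heart of the matter is the free vertex $S_0$, where no Auslander--Reiten sequence ends at $R$ and where the dimension $d$ enters. I would start from the tautological presentation $0\to\Hom_R(M,\fm)\to\Hom_R(M,R)\to S_0\to 0$, where $\fm$ is the maximal ideal of $R$ and $\Hom_R(M,\fm)$ is the radical of $P_0$. For $d\le1$ the ideal $\fm$ is itself MCM, so this term is already projective and $\mathrm{pd}_\Lambda S_0\le1$; for $d\ge2$ it is not, and the plan is to continue the resolution by repeatedly replacing the non-MCM syzygies by their maximal Cohen--Macaulay (Auslander--Buchweitz) approximations. Because $R$ is a Gorenstein hypersurface, the finite-projective-dimension error terms in these approximations are controlled, and the number of steps before the syzygy becomes MCM --- equivalently, before $\Hom_R(M,-)$ lands in the projectives --- is governed by depth, giving $\mathrm{pd}_\Lambda S_0\le\max\{1,d\}$. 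Combined with the previous paragraph this yields $\gldim\Lambda\le\max\{2,d\}$. I expect this step --- turning the heuristic ``the length is controlled by the depth'' into an exact bound through the approximation theory --- to be the main obstacle.

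Finally, for the equality when $d\ge2$, the plan is to establish the matching lower bound $\mathrm{pd}_\Lambda S_0\ge d$ (and, for $d=2$, it already suffices that $R$ is singular, so that one of the length-two resolutions above is genuinely non-split). I would detect this by a top $\Ext$ computation: transporting the question along $\Hom_R(M,-)$ and using the Gorenstein property, the relevant nonvanishing reduces to $\Ext^d_R(k,R)\cong k\ne 0$, where $k=R/\fm$. This forces $\mathrm{pd}_\Lambda S_0=\max\{2,d\}$ and hence $\gldim\Lambda=\max\{2,d\}$ for $d\ge2$, completing the proof.
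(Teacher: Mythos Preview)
The paper does not give its own proof of this statement; it is quoted as a theorem of Leuschke (reference \cite{MR2310620}) and then applied as a black box to the $A_1$ case. So there is no in-paper argument to compare your proposal against.

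That said, your outline is essentially Leuschke's proof. The reduction to the projective dimensions of the simple $\Lambda$-modules, the length-two resolutions of the $S_i$ ($i\ge 1$) coming from Auslander--Reiten sequences in $\MCM(R)$, and the separate treatment of $S_0$ via iterated MCM approximations beginning with $\fm$ are precisely his ingredients. The step you flag as the main obstacle is handled by the depth lemma: if $X\twoheadrightarrow N$ is a right $\MCM$-approximation with kernel $N'$, then $\mathrm{depth}\,N'\ge\min(d,\mathrm{depth}\,N+1)$; since $\mathrm{depth}\,\fm=1$ for $d\ge 2$, one reaches an MCM kernel after $d-1$ steps, and each step yields a short exact sequence of $\Lambda$-modules because $\Hom_R(M,-)$ is surjective on right approximations. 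This gives $\mathrm{pd}_\Lambda S_0\le d$.

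Your lower-bound sketch is the one place that needs tightening. Getting $\gldim\Lambda\ge 2$ for $d\ge 2$ is immediate (either $\fm$ is not MCM so $\Hom_R(M,\fm)$ is not projective, or the AR sequences at non-free vertices are non-split). But for $d\ge 3$ the passage from $\Ext^d_R(k,R)\ne 0$ to $\mathrm{pd}_\Lambda S_0\ge d$ is not as direct as ``transport along $\Hom_R(M,-)$'': that functor does not carry an $R$-free resolution of $k$ to an exact complex of projective $\Lambda$-modules, since $\Ext^1_R(M,\Omega^j k)$ need not vanish. Leuschke's argument instead tracks the minimal $\MCM$-approximation resolution of $S_0$ and shows it cannot terminate before step $d$; your heuristic identifies the right invariant, but the mechanism is the approximation sequence rather than a direct $\Ext$ comparison.
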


Now suppose that $k$ has odd characteristic
and $R$ is the $d$-dimensional simple singularity of type $A_{1}$;
\[
R= k[[x_{0}, \dots,x_{d}]] / (x_{0}^{2} + x_{1}^{2} + \dots +x_{d}^{2}) .
\]
Then we see as follows that for $e > 1$, 
\[
\gldim D_{R,e} \le \max\{2,d\} 
\] 
with the equality in the case $d \ge 2$.

\begin{proof}
Since the monomial $x_{0}^{p-1}x_{1}^{p-1}$ with nonzero coefficient
$\binom{p-1}{(p-1)/2}$ appears in 
$(x_{0}^{2} + x_{1}^{2} + \dots +x_{d}^{2})^{p-1}$,
we have $(x_{0}^{2} + x_{1}^{2} + \dots +x_{d}^{2})^{p-1} \notin (x_{0}^{p}, \dots, x_{d}^{p})$.
From Fedder's criterion \cite[Proposition 2.1]{MR701505}, $R$ is F-pure,
that is, for any $q$, $R^{q} \hookrightarrow R$ splits as an $R^{q}$-module homomorphism.

If $d$ is even, then $R$ has only two indecomposable MCM modules, one of which
is the trivial one. To see this, it is enough to check the case where $d=2$,
thanks to the Kn\"orrer periodicity \cite{MR877010}.
In this case, it was proved by Auslander \cite{MR816307}.
The same is true for $R^{q}$.
 From Kunz \cite{MR0252389}, $R$ is not a free $R^{q}$-module.
So $R$ must be a representation generator as an $R^{q}$-module
and the assertion follows.

If $d=1$, $R$ has three indecomposable MCM modules,
$R$, $R/(x_{0}+x_{1})$ and $R/(x_{0}-x_{1})$.
The two nontrivial MCM modules are interchanged by  a coordinate change. 
In fact, these three are the only indecomposable MCM modules
 (see Dieterich and Wiedemann \cite{MR825715}  
and Kiyek and Steinke \cite{MR818299}).
Then the same holds for $R^{q}$.
Again from the Kn\"orrer periodicity, the same holds for arbitrary odd $d$.
Since $R$ is not a free $R^{q}$-module, $R$ contains one of the two nontrivial
indecomposable MCM modules as a direct summand. 
But from the symmetry, it must contain the other too and hence is a representation
generator. The assertion follows. 
\end{proof}

If $d \ge 3$, then from a result of Quarles \cite[Corollary 3.5.5]{Quarles-thesis},
 the above $D_{R,e}$ is not Cohen-Macaulay. Namely it is not  crepant
 in the sense of Van den Bergh \cite{MR2077594}.  

\begin{prob}
How about simple singularities of other types?
\end{prob}

\bibliographystyle{hsiam}
\bibliography{mybib}

\end{document}